\newtheorem{thm}{Theorem}
\newtheorem{lem}[thm]{Lemma}        
\newtheorem{cor}[thm]{Corollary}
\newtheorem{mthm}[thm]{Main Theorem}
\newtheorem{rem}{{\it Remark}} 
\theoremstyle{plain} 
\newcommand{\thistheoremname}{}
\newtheorem*{genericthm*}{\thistheoremname}
\newenvironment{namedthm*}[1]
{\renewcommand{\thistheoremname}{#1}%
	\begin{genericthm*}}
	{\end{genericthm*}}
\begin{document}
	\title[Casselman--Shahidi conjecture]{Casselman--Shahidi's conjecture on normalized intertwining operators for groups of classical type}
   
	\author{CAIHUA LUO}
	\address{department of mathematics \\ bar-ilan university \\ ramat-gan, 5290000\\ israel}
	\email{chluo@amss.ac.cn}
	\date{}
	\subjclass[2010]{11F66, 11F70, 22E35, 22E50}
	\keywords{Generic representation, Intertwining operator, Tensor product $L$-function, Casselman--Shahidi conjecture, Classical group}
	\dedicatory{\bf Dedicated to Professor Wee Teck Gan on the occasion of his 50th birthday}
\maketitle
\begin{abstract}
	Intertwining operators play an essential role and appear everywhere in the Langlands program, their analytic properties interact directly, yet deeply with the decomposition of parabolic induction locally and the residues of Eisenstein series globally. Inspired by the profound Langlands--Shahidi theory, Casselman--Shahidi conjectured that a certain normalization factor would govern the singularity of intertwining operators for generic standard modules. Indeed, motivated by the theory of theta correspondence, especially the Siegel--Weil formula globally, and the composition problem of degenerate principal series and the demand of a g.c.d. definition of standard $L$-functions in the framework of the doubling method locally, an optimal normalization factor has been determined for degenerate principal series of classical groups via the theory of integrals on prehomogeneous vector spaces. Such a method seems impossible to be generalized to work even in the setting of degenerate generalized principal series, which are naturally involved in the recent Cai--Friedberg--Ginzburg--Kaplan's generalized doubling method. To circumvent it, we discover a new uniform argument that can answer the singularity problem of intertwining operators for a large class of induced representations. As an illustration, we prove the aforementioned Casselman--Shahidi conjecture for quasi-split groups of classical type in the paper. Along the way, with the help of Shahidi's local coefficient theory, we also prove that those normalized intertwining operators are always non-zero, and provide a new one-sentence proof of the standard module conjecture in the spirit of Casselman--Shahidi.
\end{abstract}

\section{introduction}
For simplicity, we first let $G_n$ be a quasi-split classical group defined over a $p$-adic field $F$, and $P_k=M_kN_k$ be its standard maximal parabolic subgroup with Levi subgroup $M_k\simeq GL_k\times G_{n_0}$. The setup for general connected reductive quasi-split groups will be discussed in Section \ref{sectiongeneral}. For $\tau_a$ (resp. $\sigma_\gamma$) a discrete series representation of $GL_k$ (resp. $G_{n_0}$), assume $\sigma_\gamma$ is also generic, Casselman--Shahidi conjectured that the following normalized standard intertwining operator
\[M^*(s,\tau_a,\sigma_\gamma):=\frac{1}{\alpha(s,\tau_a,\sigma_\gamma)}M(s,\tau_a,\sigma_\gamma):~\tau_a|det(\cdot)|^s\rtimes\sigma_\gamma\longrightarrow\tau_a^*|det(\cdot)|^{s^*}\rtimes\sigma_\gamma \]   
is holomorphic in $s\in\mathbb{C}$. Here $*$ means (conjugate) contragredient and $s^*$ equals (conjugate) -s. Please refer to \cite[P.563 (3)]{casselman1998irreducibility} or the next section for the details. On the other hand, analogous holomorphicity problems appear often in the development of Langlands program. Herein we only mention three aspects in which our argument could potentially make a direct contribution in our future work as follows.
\begin{enumerate}[(i).]
	\item Theta correspondence theory: Harris, Kudla, Rallis, Sweet, and Yamana's works on the composition problem of degenerate principal series for classical groups and the characterization of non-vanishing of theta-lifting \cite{kudlarallis1992,kudlasweet1997,harriskudlasweet1996,yamana2011,yamanatheta}. With the help of \cite{muicsavin2000quaternion}, a direct application of our argument would be on the composition problem of degenerate generalized principal series for classical groups, especially the under-investigated quaternion unitary groups.
	\item Residual spectrum: the reducibility of parabolic induction locally and the location of poles of Eisenstein series globally for classical groups in the framework of Jacquet and M{\oe}glin--Waldspurger \cite{jacquet1984residualGL,moeglin1989residue}. Such a basic problem has been studied extensively especially by Arthur, M{\oe}glin, and Kim among others (for example see \cite{arthur1984problemtraceformula,arthur1989unipotentautomorphicconjecture,arthur1990unipotentmotivation,moeglin1991ICM,moeglin1991discretespectrumUnipotent,moeglin2001residualconjecture,moeglin2008automorphicsquareintegrable,moeglin2011intertwiningeisenstein,kim2001residualOrthogonal,kim2017residualU(2n),grbac2009residual,jiangliuzhang2013,miller2013residual,hanzer2018nonsiegeleisenstein}). One should note that Arthur's ingenious approach, in terms of A-packets, acts as a guiding principle but lacks of concreteness, this has been complemented by a series of works of M{\oe}glin, M{\oe}glin--Renard, and Bin Xu among others (see \cite{moeglin2006certainpackets,moeglin2006paquets,moeglin2009arthurpackets,moeglin2011multiplicityone,moeglinrenard2018arthurpackets,moeglinrenard2018arthurpacketsReal,moeglinrenard2020arthurpacketsReal,xu2017moeglinparametrization,xu2021combinatorialArthurpackets,atobe2020construction}). Given the fruitful results obtained for split classical groups, it is curious to see what we could say for quaternion unitary groups. It seems possible to first generalize at least Part I of \cite{moeglin1989residue} to the setting of general linear groups over division algebras (see Remark \ref{moeglin}).
	\item The theory of integral representations of $L$-functions: a g.c.d. definition of local tensor product $L$-functions in the frameworks of the Rankin--Selberg method and the recent Cai--Friedberg--Ginzburg--Kaplan's generalized doubling method \cite{kaplan2013gcd,cai2018doubling,cai2019doubling}. Such a pursuit is motivated by the need to obtain some information of the possible poles of global $L$-functions which play an important role in the above (i) and (ii). First, our Main Theorem \ref{mainthm} removes the assumption in \cite[Theorem 1.1]{kaplan2013gcd}. Secondly, I have worked out the part of the g.c.d. definition of tensor product $L$-functions for tempered representations under the framework of the generalized doubling method for classical groups (see \cite{luo2021gcd}). On the other hand, great attention has been attracted to the ambitious Braverman--Kazhdan/Ng{\^o} program. This is a mega-project to systematically construct automporphic $L$-functions in the sense of Langlands' functoriality conjecture, which grows from generalizing Godement--Jacquet's zeta integral for standard $L$-functions of general linear groups (see \cite{bravermankazhdan2000gammafunctions,bravermankazhdan2002intertwining,sakellaridis,lafforgue2014Lfunctions,ngo2014Lfunction}). Recently, works have been done to rewrite the well-developed Rankin--Selberg method and doubling method in the language of this program (see \cite{li2018zetaintegrals,shahidi2018Lfunctions,getzliu2021refined,jiangluozhang2020Symplectic}). In view of Getz--Liu's approach via Mellin inversion of excellent sections in the doubling method, it may be possible to extend their result to the setting of the recent generalized doubling method. However, a more interesting way to put the generalized doubling method into the Braverman--Kazhdan/Ng{\^o} program might be to follow Jiang--Luo--Zhang's work.
\end{enumerate} 
Specifically, we want to point out that the original path, adopted in all previous works, to attack the holomorphicity problem for degenerate principal series is to analyze integrals on prehomogeneous vector spaces (see \cite{piatetskishapirotriple,kudlarallis1992,ikeda1992location,sweet1995,yamana2011}). Such a method seems hard to be generalized to the setting of degenerate generalized principal series, whilst our argument illustrated in the paper works almost perfectly in this setting and beyond. On the other hand, one may note that attentions have since been focused much more on the analogous problem for Langlands' conjectural fully normalized intertwining operators, which are designed to preserve the unitarity and the multiplicative relations with respect to the decompositions of Weyl elements as proved by Shahidi for generic representations in \cite{shahidi1990proof}, but analyzing such a normalization seems not good enough to detect the possible poles of standard intertwining operators on the whole complex plane (see \cite{zhang1997localintertwining,jantzenkim2001intertwiningimage,cogdellkimshapiroshahidi2001,cogdellkimshapiroshahidi2004,kim2005intertwiningLfunction,moeglin2010holomorphy,moeglin2011intertwiningeisenstein,kim2011local,heiermann2013tempered}). However, as discussed in (i,ii,iii), it is pivotal and would be beneficial to study directly, if we could, the holomorphicity problem of the ad hoc ``half-normalized'' intertwining operators in the sense of Casselman--Shahidi. Indeed, it turns out that we do have a way, inspired mainly by the arguments in \cite[Part I]{moeglin1989residue}, to handle it at least in the setting of classical groups, which is based on a simple observation of a ``non-symmetry'' property of the normalization factors appearing in the associated reduced decompositions of Weyl elements with respect to different embeddings of inducing data. By doing so, this gives us a sense that we may have discovered a uniform way to attack the holomorphicity problem of ``half-normalized'' intertwining operators for arbitrary induced representations if normalization factors are given, or saying it in another way, a uniform way to find the poles of standard intertwining operators via reverse engineering our argument in the paper. One may also see that those normalized intertwining operators discussed in the paper are non-zero, which follows from Shahidi's local coefficient theory (see Corollary \ref{Cornonzero}). But it may not be an easy task to see that the non-zero property of Casselman--Shahidi's normalized intertwining operators holds in general (see \cite{luo2021holomorphicityClassicalGroup,luo2021reducibilitypointsGL}). We hope one could get some information for this question with the help of the tool of Jacquet module or analogues local coefficient theory (see \cite{friedberggoldberg1999nongeneric,cai2018doubling}).

The novel part of our argument is an observation of an intrinsic non-symmetry property of normalization factors involved in different reduced decompositions of the intertwining operator $M(s,\tau_a,\sigma_\gamma)$ corresponding to different embeddings of our inducing data $\tau_a$ and $\sigma_{\gamma}$. To demonstrate the elementary idea, we provide an example of the non-symmetry property as follows.

\underline{(Way 1: $\tau_a\hookrightarrow|det(\cdot)|^{\frac{a-1}{2}}\tau\times |det(\cdot)|^{-\frac{1}{2}}\tau_{a-1})$}
\[\xymatrix{ |det(\cdot)|^{s}\tau_a\rtimes \sigma_\gamma \ar@{^(->}[r] \ar[ddd]_{M(s,\tau_a,\sigma_\gamma)} & |det(\cdot)|^{s+\frac{a-1}{2}}\tau\times\mbox{$\underbrace{|det(\cdot)|^{s-\frac{1}{2}}\tau_{a-1}\rtimes \sigma_\gamma}$}\ar[d]^{M(s-\frac{1}{2},\tau_{a-1},\sigma_\gamma)}\\
	&\mbox{$\underbrace{|det(\cdot)|^{s+\frac{a-1}{2}}\tau\times|det(\cdot)|^{-s+\frac{1}{2}}\tau_{a-1}}$}\rtimes \sigma_\gamma  \ar[d]^{M_{GL}(\cdots)}\\
	&|det(\cdot)|^{-s+\frac{1}{2}}\tau_{a-1}\times\mbox{$\underbrace{|det(\cdot)|^{s+\frac{a-1}{2}}\tau\rtimes\sigma_\gamma}$} \ar[d]^{M(s+\frac{a-1}{2},\tau,\sigma_\gamma)}\\ 
	|det(\cdot)|^{-s}\tau_a\rtimes \sigma_\gamma \ar@{^(->}[r] & |det(\cdot)|^{-s+\frac{1}{2}}\tau_{a-1}\times|det(\cdot)|^{-s-\frac{a-1}{2}}\tau\rtimes\sigma_\gamma.
}\] 

\underline{(Way 3: $\sigma_\gamma\hookrightarrow|det(\cdot)|^{\frac{r-1}{4}}\tau\rtimes \sigma_{\gamma'})$}
\[\xymatrix{|det(\cdot)|^{s}\tau_a\rtimes \sigma_{\gamma} \ar@{^(->}[r] \ar[ddd]_{M(s,\tau_a,\sigma_{\gamma})} & \mbox{$\underbrace{|det(\cdot)|^{s}\tau_{a}\times|det(\cdot)|^{\frac{r-1}{4}}\tau}$}\rtimes \sigma_{\gamma'}\ar[d]^{M_{GL}(\cdots)}\\
	&|det(\cdot)|^{\frac{r_1-1}{4}}\tau\times\mbox{$\underbrace{|det(\cdot)|^{s}\tau_{a}\rtimes\sigma_{\gamma'}}$} \ar[d]^{M(s,\tau_a,\sigma_{\gamma'})}\\ 
	&\mbox{$\underbrace{|det(\cdot)|^{\frac{r_1-1}{4}}\tau\times|det(\cdot)|^{-s}\tau_a}$}\rtimes \sigma_{\gamma'}  \ar[d]^{M_{GL}(\cdots)}\\
	|det(\cdot)|^{-s}\tau_a\rtimes \sigma_{\gamma} \ar@{^(->}[r] & |det(\cdot)|^{-s}\tau_a\times|det(\cdot)|^{\frac{r_1-1}{4}}\tau\rtimes\sigma_{\gamma'}.
}\]
Those two commutative diagrams give rise to two discrepancies of normalization factors on the right hand side comparing to the left hand side as follows.
\begin{align*}
P_1:&=\frac{\alpha(s-\frac{1}{2},\tau_{a-1},\sigma_\gamma)\alpha_{GL}(\cdots)\alpha(s+\frac{a-1}{2},\tau,\sigma_\gamma)}{\alpha(s,\tau_a,\sigma_\gamma)}\\
P_3:&=\frac{\alpha_{GL}(\cdots)\alpha(s,\tau_a,\sigma_{\gamma'})\alpha_{GL}(\cdots)}{\alpha(s,\tau_a,\sigma_\gamma)}.
\end{align*}
Here $\alpha_{GL}(\cdot)$ is the normalization factor for $GL$ (see Lemma \ref{lemGL}). Now our elementary idea is 
\[\mbox{\bf if $P_1$ and $P_3$ are co-prime, then $M^*(s,\tau_a,\sigma_\gamma)$ is holomorphic in $s$.}  \]
This results from an induction argument. If not, we first sort out the bad cases, and then carry out a detailed case-by-case analysis by hand. Luckily, the bad cases are pretty easy to handle. As an illustration, we will first prove Cassleman--Shahidi's holomorphicity conjecture for split symplectic groups and special orthogonal groups, i.e., $G_n=Sp_{2n}$, $SO_{2n+1}$ or $SO_{2n}$, via a concrete calculation. Indeed, our setup and argument in the main body of the paper also work for other quasi-split classical groups. Please refer to Remark \ref{rmkforquasi-split} for the necessary modifications. Given this, we can reduce the case of groups of classical type to the special case of classical groups, thus finishing the proof of Casselman--Shahidi's conjecture for all groups of classical type, i.e., Type $A_n$, $B_n$, $C_n$, or $D_n$, in the paper. It seems that the reduced decomposition of Weyl element as we expected is a little bit complicated in the setting of exceptional groups, we leave the case of groups of exceptional type for a future work. At last, we also want to point out that after typing up the manuscript, we realized that some of the arguments could be simplified significantly based on some basic properties of standard intertwining operators for standard modules, but we still want to keep the elementary argument in its full form serving as a template for future work, for the reason that our argument is motivated by, and can be applied to, the holomorphicity problem of a large class of induced representations where those special properties do not hold (see Remark \ref{simplifyargument}).  

The structure of the paper is as follows. In the next section, we prepare some notation and state our Main Theorem \ref{mainthm}, as well as Corollaries \ref{Cornonzero} and \ref{standardmoduleconj}, while its proof will be given in the section next to it. In the last section, we prove Casselman--Shahidi's holomorphicity conjecture in the setting of groups of classical type, i.e., Theorem \ref{mthgeneral}, which is achieved, via an abstract argument, by reducing the problem to the special case of classical groups that we proved earlier.

\section{main theorem}
Let $F$ be a non-archimedean local field of characteristic zero. Denote by $\mathcal{O}$ its
ring of integers and by $\mathcal{P}$ its maximal ideal generated by the uniformizer $\mathfrak{w}$. Let $q$ be the number of elements in the residue field $\mathcal{O}/\mathcal{P}$. We set $|\cdot|$ to be the absolute value of $F$ such that $|\mathfrak{w}|=q^{-1}$. Let $E/F$ be a quadratic field extension of $F$, we use the same terminologies of $F$ for $E$ if there is no confusion. For ease of statement, we first assume $G_n$ is a split classical group of type $B_n$, $C_n$ or $D_n$ of rank $n$ defined over $F$, i.e., $G_n=Sp_{2n}(F)$, $SO_{2n+1}(F)$ or $SO_{2n}(F)$. The quasi-split case will be given in Remark \ref{rmkforquasi-split}.
 
\subsection{Certain induced representations}Let $k$ be a positive integer and $P_k=M_kN_k\subset G_n$ be a standard maximal parabolic subgroup of $G_n$ with its Levi subgroup $M_k\simeq GL_k\times G_{n_0}$, here $n_0=n-k\in \mathbb{N}$. Let $a$ be a positive integer and $\tau$ be a fixed unitary supercuspidal representation of $GL_{\frac{k}{a}}$ with $\frac{k}{a}\in \mathbb{N}$, and denote by $\tau_a$ the associated discrete series of $GL_k$, i.e., the unique subrepresentation of the normalized induced representation
\[ |det(\cdot)|^{\frac{a-1}{2}}\tau\times|det(\cdot)|^{\frac{a-3}{2}}\tau\times\cdots\times |det(\cdot)|^{-\frac{a-1}{2}}\tau:=Ind^{GL_k}(|det(\cdot)|^{\frac{a-1}{2}}\tau\otimes \cdots \otimes|det(\cdot)|^{-\frac{a-1}{2}}\tau).\]
Let $n_{00}$ be a non-negative integer and $\sigma$ be a fixed generic supercuspidal representation of $G_{n_{00}}$ (please refer to the Subsection \ref{genericnotion} for the notion of generic), we say a discrete series $\sigma_\gamma$ of $G_{n_0}$ is supported on $\tau$ and $\sigma$ if the unitary part of the Bernstein--Zelevinsky data of $\sigma_\gamma$ is contained in $\{\tau,~\tau^\vee,~\sigma\}$, i.e., $\sigma_\gamma$ is a constituent of the normalized induced representation
\[|det(\cdot)|^{s_1}\tau\times|det(\cdot)|^{s_2}\tau\times\cdots\times |det(\cdot)|^{s_t}\tau\rtimes\sigma:=Ind^{G_{n_0}}(|det(\cdot)|^{s_1}\tau\otimes|det(\cdot)|^{s_2}\tau\otimes\cdots\otimes |det(\cdot)|^{s_t}\tau\otimes\sigma) \]
for some real numbers $s_i\in \mathbb{R}$, $i=1,~\cdots,~t$. Here $\tau^\vee$ is the contragredient representation of $\tau$. Indeed, the condition that $\sigma_\gamma$ is a discrete series implies that $\tau^\vee\simeq \tau$, i.e., $\tau$ is self-dual (see \cite{tadic1998regular,heiermann2004decomposition,luo2018R}).

In the paper, we will only consider the case that $\sigma_\gamma$ is a generic discrete series representation of quasi-split $G_{n_0}$.
\subsection{Intertwining operator} For simplicity, we first exclude the case $G_n=SO_{2n}$ in what follows. For $\sigma_\gamma$ a generic discrete series representation of $G_{n_0}$ with partial cuspidal support $\sigma$ in the sense of M{\oe}glin--Tadi\'c \cite{moeglin2002construction},
we let $M(s,\tau_a,\sigma_\gamma)$ be the following standard intertwining operator
\[M(s,\tau_a,\sigma_\gamma):~\tau_a|det(\cdot)|^s\rtimes\sigma_\gamma\longrightarrow\tau_a^\vee|det(\cdot)|^{-s}\rtimes\sigma_\gamma \]
defined by the continuation of the integral \cite{waldspurger2003formule}, for $f_s(g)\in \tau_a|det(\cdot)|^s\rtimes\sigma_\gamma$,
\[\int_{N_{k}}f_s(w_kug)du. \]
Here $\tau_a^\vee\simeq (\tau^\vee)_a$, and the Weyl element $w_k\in G_n$ is given by, $N_0=2n_0$ or $2n_0+1$ depends on $G_{n_0}$,
\[w_k:=(-1)^k\begin{pmatrix}
&  & I_{k} \\ 
&  I_{N_0} &  \\ 
\pm I_{k} &  & 
\end{pmatrix}. \]
For the case $G_n=SO_{2n}$, set 
\[c=diag\left(I_{n-1}, \begin{bmatrix}
& 1 \\ 
1 & 
\end{bmatrix},I_{n-1} \right),\]
The necessary modification throughout the paper is to replace $\tau_a^\vee|det(\cdot)|^{-s}\rtimes\sigma_\gamma$ and $w_k$ by 
\[\mbox{$c^k.\left(\tau_a^\vee|det(\cdot)|^{-s}\rtimes\sigma_\gamma\right)$ and $c^kw_k$, respectively.}\]
As seen in the introduction, certain reduced decompositions of $w_k$ has been, and will be used often in our argument. For the convenience of the readers, we record them as follows.
\begin{align*}
\mbox{Way 1,2}:~&w_k=diag\left(I_{k-d},w_{d},I_{k-d}\right) \cdot diag\left(\begin{bmatrix}
& I_{k-d} \\ 
I_d & 
\end{bmatrix},I_{N_0}, \begin{bmatrix}
& I_d \\ 
I_{k-d} & 
\end{bmatrix}\right) \cdot diag\left(I_d,w_{k-d},I_d\right),\\
\mbox{Way 3,4}:~&w_k=diag\left(\begin{bmatrix}
& I_k \\ 
I_d & 
\end{bmatrix}, I_{N_0-2d}, \begin{bmatrix}
& I_d \\ 
I_k & 
\end{bmatrix}\right) diag\left(I_d,w_k,I_d\right) diag\left(\begin{bmatrix}
& I_d \\ 
I_k & 
\end{bmatrix},I_{N_0-2d},\begin{bmatrix}
& I_k \\ 
I_d & 
\end{bmatrix}\right).
\end{align*}

Following Langlands--Shahidi's normalization of intertwining operators \cite{shahidi1990proof,casselman1998irreducibility}, we define $M^*(s,\tau_a,\sigma_\gamma)$ to be
\[\frac{1}{\alpha(s,\tau_a,\sigma_\gamma)}M(s,\tau_a,\sigma_\gamma), \]
and 
\[ \alpha(s,\tau_a,\sigma_\gamma):=L(2s,\tau_a,\rho)L(s,\tau_a\times \sigma_\gamma), \]
where for $n_0=0$, $L(s,\tau\times\sigma_\gamma):=L(s,\tau)$ if $G_n=Sp_{2n}$ , 1 otherwise. Here $\rho$ (resp. $\rho^-$) is defined as follows.
\begin{align*}
\rho:&=\begin{cases}
Sym^2,& \mbox{if } G_n=SO_{2n+1},\\
\Lambda^2,& \mbox{if } G_n=Sp_{2n}\mbox{ or } SO_{2n};
\end{cases}\\
\rho^-:&=\begin{cases}
\Lambda^2,& \mbox{if } G_n=SO_{2n+1},\\
Sym^2,& \mbox{if } G_n=Sp_{2n} \mbox{ or }SO_{2n}.
\end{cases}
\end{align*}
Here $Sym^2$ (resp. $\Lambda^2$) is the symmetric (resp. exterior) second power of the standard representation of $GL_k$.

On the other hand, based on the calculation of Plancherel measure (see \cite{shahidi1990proof}), we know that, up to invertible elements in $\mathbb{C}[q^{-s},~q^s]$,
\[M^*(-s,\tau_a^\vee,\sigma_\gamma)\circ M^*(s,\tau_a,\sigma_\gamma)=\frac{1}{\beta(s,\tau_a,\sigma_\gamma)\beta(-s,~\tau_a^\vee,\sigma_\gamma)}. \]
Here $\beta(s,\tau_a,\sigma_\gamma)$ is given similarly as $\alpha(s,\tau_a,\sigma_\gamma)$ by
\[\beta(s,\tau_a,\sigma_\gamma):=L(2s+1,\tau_a,\rho)L(s+1,\tau_a\times \sigma_\gamma). \]
Now we can state our Main Theorem as follows.
\begin{mthm}(Casselman--Shahidi conjecture \cite{casselman1998irreducibility})\label{mainthm}
	Retain the notions as above. We have
	\[M^*(s,\tau_a,\sigma_\gamma):=\frac{1}{\alpha(s,\tau_a,\sigma_\gamma)}M(s,\tau_a,\sigma_\gamma)\mbox{ is holomorphic for all }s\in \mathbb{C}. \]
\end{mthm}
\begin{rem}\label{rmkreduction}
	One may see that the holomorphy of $M^*(s,\tau_a,\sigma_\gamma)$ follows readily from \cite{silberger1980special} if $\tau\not\simeq \tau^\vee$. On the other hand, it is also easy to see that we can reduce the problem to the case that $\sigma_\gamma$ is supported on $\tau$ and $\sigma$ following the argument we provided in the next section, as a by-product of a ``product formula'' decomposition pattern of induced representations in the sense of Jantzen (see \cite{jantzen1997supports,jantzenluo}). {\bf Therefore, we always assume that $\tau\simeq \tau^\vee$ (or $\tau^*$ the conjugate contragredient dual in Remark \ref{rmkforquasi-split}) and $\sigma_\gamma$ is supported on $\tau$ and $\sigma$ in the remaining of the paper}. On the other hand, the original Casselman--Shahidi conjecture is stated for arbitrary co-rank one standard modules, i.e., induced representations of essentially tempered representations as opposed to essentially discrete series representations herein, but the reduction step is an easy corollary of certain multiplicativity property of $L$-functions given in \cite[Theorem 5.1]{casselman1998irreducibility}.
\end{rem}
\begin{rem}\label{rmkforquasi-split}
	Indeed, one can apply the same formulation and argument in the paper to establish Main Theorem \ref{mainthm} for quasi-split $G_n=U_{2n+1,E/F}$, $U_{2n,E/F}$, and $SO^*_{2n}$, as well as the non-connected $O_{2n}$, with a slight modification for the unitary groups case as follows, we leave the details to the readers,
	\begin{itemize}
		\item $L(s,\tau\times\sigma)=L(s,\tau\times BC(\sigma))$, where $BC$ is the base change for unitary groups (see \cite{kimkrishnamurthy2004,kimkrishnamurthy2005,cogdellshapiroshahidi2011,mok2015endoscopic,soudrytanay2015}). For $n_0=0$, $L(s,\tau\times\sigma):=L(s,\tau)$ if $G_n=U_{2n+1}$, 1 otherwise. 
		\item $\rho$ (resp. $\rho^-$) is defined as follows.
		\begin{align*}
		\rho:&=\begin{cases}
		\mbox{Asai},& \mbox{if } G_n=U_{2n},\\
		\mbox{Asai}\otimes \chi_{E/F}, &\mbox{if } G_n=U_{2n+1};
		\end{cases}\\
		\rho^-:&=\begin{cases}
		\mbox{Asai}\otimes \chi_{E/F},& \mbox{if } G_n=U_{2n},\\
		\mbox{Asai}, &\mbox{if } G_n=U_{2n+1}.
		\end{cases}
		\end{align*}
		Here ``Asai'' is the Asai representation of the $L$-group of the scalar restriction $Res_{E/F}(GL_k)$, $\chi_{E/F}$ is the quadratic character associated to the field extension $E/F$ via class field theory.
	\end{itemize}	 
\end{rem}
With the help of Shahidi's local coefficient theory, we can obtain the following corollaries of Main Theorem \ref{mainthm}. We would like to point out that the following argument works in general, provided that Casselman--Shahidi's holomorphicity conjecture is established, for example the setting of groups of classical groups (see Theorem \ref{mthgeneral}). As seen in Remark \ref{rmkreduction}, that is to say that our corollaries stated below can be extended to the setting of arbitrary co-rank one parabolic induced representations inducing from essentially tempered representations, instead of essentially discrete series just presented in the paper for simplicity, for all quasi-split groups of classical type without any difficulty.
\begin{cor}\label{Cornonzero}
	Keep the notation as before. We have
	\[M^*(s,\tau_a,\sigma_{\gamma}) \mbox{ is always non-zero for }s\in \mathbb{C}. \]
\end{cor}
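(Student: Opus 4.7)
The plan is to combine Main Theorem \ref{mainthm} with Shahidi's local coefficient theorem \cite{shahidi1990proof}. Because $\tau_a$ and $\sigma_\gamma$ are both generic, the induced representation $\tau_a|det(\cdot)|^s\rtimes\sigma_\gamma$ carries a non-zero Whittaker functional $\lambda_s$ for every $s\in\mathbb{C}$, furnished by the meromorphically continued Jacquet integral. I would first invoke Shahidi's formula, applied to the maximal parabolic $P_k$ and the Levi-adjoint decomposition of $N_k$ into the tensor and $\rho$-square summands, to write
\[
C_\psi(s,\tau_a,\sigma_\gamma)=\gamma(s,\tau_a\times\sigma_\gamma,\psi)\,\gamma(2s,\tau_a,\rho,\psi)\;\equiv\;\frac{\beta(-s,\tau_a^\vee,\sigma_\gamma)}{\alpha(s,\tau_a,\sigma_\gamma)}\pmod{\mathbb{C}[q^{-s},q^s]^{\times}}.
\]
Dividing the Whittaker identity $\lambda_s(f)=C_\psi(s,\tau_a,\sigma_\gamma)\lambda_{-s}(M(s,\tau_a,\sigma_\gamma)f)$ through by $\alpha(s,\tau_a,\sigma_\gamma)$ will then yield the key relation
\[
\lambda_{-s}\bigl(M^*(s,\tau_a,\sigma_\gamma)f\bigr)=\frac{u(s)}{\beta(-s,\tau_a^\vee,\sigma_\gamma)}\,\lambda_s(f),\qquad u\in\mathbb{C}[q^{-s},q^s]^{\times}.
\]

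Next I would argue by contradiction. Suppose $M^*(s_0,\tau_a,\sigma_\gamma)=0$ for some $s_0\in\mathbb{C}$; pick $f$ with $\lambda_{s_0}(f)\neq 0$. The displayed identity immediately forces $\beta(-s,\tau_a^\vee,\sigma_\gamma)$ to have a pole at $s=s_0$. By the tempered $L$-function conjecture for quasi-split classical groups, both $L(1-2s,\tau_a^\vee,\rho)$ and $L(1-s,\tau_a^\vee\times\sigma_\gamma)$ are holomorphic on the half-plane $\Re s<1/2$, so such a pole can only occur when $\Re s_0\geq 1/2>0$. But at such $s_0$ the induced representation $\tau_a|det(\cdot)|^{s_0}\rtimes\sigma_\gamma$ is a standard module in Langlands' sense, so the standard intertwining operator $M(s_0,\tau_a,\sigma_\gamma)$ surjects onto its unique Langlands quotient; in particular it is non-zero. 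Since $\alpha(s)=L(2s,\tau_a,\rho)L(s,\tau_a\times\sigma_\gamma)$ is a product of tempered $L$-factors and is therefore holomorphic and nowhere vanishing on $\Re s>0$, the scalar $\alpha(s_0)$ is finite and non-zero, whence $M^*(s_0,\tau_a,\sigma_\gamma)=M(s_0,\tau_a,\sigma_\gamma)/\alpha(s_0)\neq 0$---the desired contradiction.

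The most delicate point will be carefully matching Shahidi's conventions with the Casselman--Shahidi normalization $M^*=M/\alpha$ used here, so that $C_\psi$ indeed takes the form $\beta(-s,\tau_a^\vee,\sigma_\gamma)/\alpha(s,\tau_a,\sigma_\gamma)$ up to units after accounting for $\epsilon$-factors. The remaining ingredients---non-vanishing of Whittaker functionals on generic induced representations, Langlands' quotient theorem, and the tempered $L$-function conjecture for quasi-split classical groups---are by now standard, so once the local coefficient formula is in place the argument concludes quickly.
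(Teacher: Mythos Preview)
Your proposal is correct and follows essentially the same route as the paper: invoke Shahidi's local coefficient formula to rewrite the Whittaker identity as $\lambda(s)=\beta(-s)\,\lambda(-s)\circ M^*(s)$ (up to units), then use non-vanishing of $\lambda(s)$ together with holomorphy of $\beta(-s)$ on $\Re(s)<\tfrac{1}{2}$ to rule out zeros there, and cover the remaining half-plane $\Re(s)>0$ via the standard-module argument that $M(s)$ is non-zero and $\alpha(s)$ is a finite non-zero scalar. The paper compresses this last step into the phrase ``it is readily to see,'' with the standard-module input spelled out separately in Remark~\ref{simplifyargument}; you have simply made that part of the argument explicit within the proof itself.
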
 
\begin{cor}[Standard module conjecture]\label{standardmoduleconj}
	Maintain the notions as defined earlier. If the unique Langlands quotient of the standard module $|det(\cdot)|^{s}\tau_a\rtimes \sigma_{\gamma}$ with $Re(s)>0$ is generic, then
	\[|det(\cdot)|^{s}\tau_a\rtimes \sigma_{\gamma}\mbox{ is irreducible.} \] 
\end{cor}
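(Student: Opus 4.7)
The plan is to derive Corollary \ref{standardmoduleconj} from Main Theorem \ref{mainthm} and Shahidi's local coefficient theorem in essentially one chain of equivalences, which is precisely the Casselman--Shahidi spirit alluded to in the abstract. Writing $I(s):=|det(\cdot)|^{s}\tau_a\rtimes\sigma_\gamma$, I would first observe that the unique Langlands quotient $J(s)$ is the image of $M(s,\tau_a,\sigma_\gamma)$, so $I(s)$ is irreducible iff $M(s,\tau_a,\sigma_\gamma)$ is an isomorphism, equivalently the Harish-Chandra $\mu$-function $\mu(s,\tau_a,\sigma_\gamma)$ is non-zero. By Shahidi's Plancherel measure formula, this $\mu$-function equals, up to a unit in $\mathbb{C}[q^{-s},q^s]$, the ratio $\alpha(s,\tau_a,\sigma_\gamma)\alpha(-s,\tau_a,\sigma_\gamma)/\bigl(\beta(s,\tau_a,\sigma_\gamma)\beta(-s,\tau_a,\sigma_\gamma)\bigr)$, and since the $L$-factors appearing in $\alpha$ and $\beta$ have no zeros on $\mathbb{C}$, the vanishing $\mu(s)=0$ is equivalent to a pole of $\beta(s)\beta(-s)$.

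Second, I would invoke Shahidi's local coefficient theorem for the generic inducing datum $(\tau_a,\sigma_\gamma)$: $\lambda_{-s}\circ M(s,\tau_a,\sigma_\gamma)=C(s,\tau_a,\sigma_\gamma)^{-1}\lambda_s$, with $C(s,\tau_a,\sigma_\gamma)=\gamma(s,\tau_a\times\sigma_\gamma)\gamma(2s,\tau_a,\rho)$. Unpacking the $\gamma$-factors through their definition and the multiplicativity of local coefficients identifies this with $\varepsilon(\cdots)\cdot\beta(-s,\tau_a,\sigma_\gamma)/\alpha(s,\tau_a,\sigma_\gamma)$ (up to a non-vanishing $\varepsilon$-factor unit in $\mathbb{C}[q^{-s},q^s]$). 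Hence the hypothesis that $J(s)=\mathrm{image}\,M(s,\tau_a,\sigma_\gamma)$ is generic, i.e.\ that $\lambda_{-s}|_{J(s)}\neq 0$, is equivalent to $C(s,\tau_a,\sigma_\gamma)\neq\infty$, which, since $\alpha(s)$ has no zeros, forces $\beta(-s,\tau_a,\sigma_\gamma)$ to be finite.

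Finally, for $Re(s)>0$ the factor $\beta(s,\tau_a,\sigma_\gamma)=L(2s+1,\tau_a,\rho)L(s+1,\tau_a\times\sigma_\gamma)$ is automatically finite, as its $L$-factor arguments have real part greater than $1$. Combined with the previous paragraph, both $\beta(s)$ and $\beta(-s)$ are finite at $s$, whence $\mu(s,\tau_a,\sigma_\gamma)\neq 0$ and $I(s)$ is irreducible. The one genuine subtlety in this plan is the identification of Shahidi's local coefficient $C(s,\tau_a,\sigma_\gamma)$ with $\varepsilon(\cdots)\beta(-s)/\alpha(s)$ via the multiplicativity formula, for which Main Theorem \ref{mainthm} is doing the real work: it ensures that the formal $\alpha$--$\beta$ ratio is genuinely holomorphic with no spurious cancellations on $Re(s)>0$, so that the chain of equivalences genericity $\Leftrightarrow C(s)\neq\infty\Leftrightarrow\beta(-s)$ finite $\Leftrightarrow\mu(s)\neq 0$ reads off as the promised one-sentence proof.
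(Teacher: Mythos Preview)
Your argument is correct and follows essentially the same route as the paper: both deduce the corollary from Shahidi's local coefficient formula $C_\psi(s)=\beta(-s)/\alpha(s)$ (up to a unit), the fact that $\beta(s)$ is pole-free for $Re(s)>-\tfrac12$, and Main Theorem~\ref{mainthm}. The paper phrases the conclusion geometrically (the image of $M^*(s)$ for $Re(s)<0$ is generic, contradicting uniqueness of generic constituents if the standard module is reducible), whereas you phrase it via the composition $M^*(-s)\circ M^*(s)=(\beta(s)\beta(-s))^{-1}$; these are the same computation.

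One small correction on bookkeeping: the place where Main Theorem~\ref{mainthm} is genuinely needed is not in identifying $C_\psi(s)$ with $\beta(-s)/\alpha(s)$---that is Shahidi's theorem, valid independently---but rather in your first step, to make the composition $M^*(-s)\circ M^*(s)=(\beta(s)\beta(-s))^{-1}$ hold \emph{pointwise} at the given $s$. Since $Re(-s)<0$, the unnormalized $M(-s)$ may have a pole, and it is exactly the holomorphicity of $M^*(-s)$ that lets you conclude ``$\beta(s)\beta(-s)$ finite $\Rightarrow$ $M^*(s)$ injective $\Rightarrow$ $I(s)$ irreducible.'' (Also, with the paper's conventions one has $\mu(s)=\beta(s)\beta(-s)/(\alpha(s)\alpha(-s))$, the reciprocal of what you wrote, but this does not affect your chain of equivalences since $L$-factors never vanish.)
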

\begin{proof}[Proof of Corollaries \ref{Cornonzero} and \ref{standardmoduleconj}]
	Recall Shahidi's local coefficient theory, given by the following diagram, please refer to \cite{shahidi1981certain} for the notation and results in details,
	\[ \xymatrix{ |det(\cdot)|^{s}\tau_a\rtimes \sigma_{\gamma}\ar[rr]^{M(s,\tau_a,\sigma_{\gamma})} \ar[dr]_{\lambda(s,\cdots)} & & |det(\cdot)|^{-s}\tau_a\rtimes \sigma_{\gamma}\ar[dl]^{\lambda(-s,\cdots)} \\
	&	\mathbb{C}_{\psi} &
	 }\]
 says that there exists a rational coefficient $C_\psi(s,\cdots)$ given by, up to an invertible element in $\mathbb{C}[q^{-s},q^s]$,
 \[C_\psi(s,\cdots)=\frac{\beta(-s,\tau_a,\sigma_{\gamma})}{\alpha(s,\tau_a,\sigma_{\gamma})} \]
 such that, up to an invertible element in $\mathbb{C}[q^{-s},q^s]$,
 \[\lambda(s,\cdots)=C_\psi(s,\cdots)\lambda(-s,\cdots)\circ M(s,\tau_a,\sigma_{\gamma})=\beta(-s,\cdots)\lambda(-s,\cdots)\circ M^*(s,\tau_a,\sigma_{\gamma}). \]
   On one hand, it is well-known that $\beta(s)$ has no poles for $Re(s)> -\frac{1}{2}$ (see \cite[Section 4]{casselman1998irreducibility} or \cite{heiermann2013tempered}). On the other hand, \cite[Proposition 3.1]{shahidi1981certain} tells us that $\lambda(s,\cdots)$ is holomorphic and always non-zero for $s\in \mathbb{C}$ (see also \cite{casselman1980unramified}). Combining those facts together, it is readily to see that \[M^*(s,\tau_a,\sigma_{\gamma})\mbox{ is always non-zero,} \]
   thus finishing the proof of Corollary \ref{Cornonzero}. Moreover, one can also see readily that, for $Re(s)<0$, the image of $M^*(s,\tau_a,\sigma_{\gamma})$ is generic, i.e.,
   \[Im(M^*(s,\tau_a,\sigma_{\gamma}))\mbox{ is generic for }Re(s)<0. \]
   Which in turn contradicts with the uniqueness property of generic constituents in a parabolic induced representation if \[|det(\cdots)|^{-s}\tau_a\rtimes \sigma_{\gamma}\mbox{ is reducible,}\] 
   whence completing the proof of Corollary \ref{standardmoduleconj}.    
\end{proof}	
\begin{rem}
	One may notice that Casselman--Shahidi's standard module conjecture, i.e., Corollary \ref{standardmoduleconj}, has been extensively investigated by Mui{\'c}--Shahidi among others, and proved in general by Heiermann--Mui{\'c} (see \cite{heiermann2007standard,muicshahidi1998standardmoduleconj}). But their approach is different from our simple argument which follows the spirit of Casselman--Shahidi in \cite{casselman1998irreducibility}.
\end{rem}

\section{proof of main theorem \ref{mainthm}}\label{sectionproof}
Before turning to the proof of our Main Theorem \ref{mainthm}, let us first prepare some lemmas which will be needed later on as follows. Recall that $\tau_a$ is a self-dual discrete series representation of $GL_k$ associated to $\tau$ a supercuspidal representation of $GL_{\frac{k}{a}}$, i.e., \[\tau_a\hookrightarrow |det(\cdot)|^{\frac{(a-1)}{2}}\tau\times|det(\cdot)|^{\frac{(a-3)}{2}}\tau\times\cdots\times |det(\cdot)|^{-\frac{(a-1)}{2}}\tau.\]
Consider the standard intertwining operator as in \cite[Part 1]{moeglin1989residue}
\[M_{GL}(s,\tau_a, \tau_b):~ |det(\cdot)|^s\tau_a\times |det(\cdot)|^{-s}\tau_b\longrightarrow |det(\cdot)|^{-s}\tau_b\times |det(\cdot)|^s\tau_a,\]
we define the associated normalized version to be
\[M^*_{GL}(s,\tau_a,\tau_b):=\frac{1}{\alpha_{GL}(s,\tau_a,\tau_b)}M_{GL}(s,\tau_a,\tau_b). \]
Here $\alpha_{GL}(s,\tau_a,\tau_b):=L(2s,\tau_a\times \tau_b)$. Then we have
\begin{lem}\label{lemGL}
	$M^*_{GL}(s,\tau_a,\tau_b)$ is holomorphic for any $s\in \mathbb{C}$.
\end{lem}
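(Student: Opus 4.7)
The plan is to reduce Lemma \ref{lemGL} to the supercuspidal case via the standard multiplicativity argument, following the spirit of Moeglin--Waldspurger. First, embed both $\tau_a$ and $\tau_b$ as subrepresentations of their defining Zelevinsky segments of twists of $\tau$, so that $|det(\cdot)|^s\tau_a \times |det(\cdot)|^{-s}\tau_b$ sits inside an iterated normalized parabolic induction of $ab$ twists of the form $|det(\cdot)|^{z_i}\tau$. The operator $M_{GL}(s,\tau_a,\tau_b)$ is then obtained as the restriction to this subrepresentation of a composition of $ab$ rank-one intertwining operators, each of which swaps one twist coming from the $\tau_a$-block past one twist coming from the $\tau_b$-block. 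A reduced decomposition of the corresponding Weyl element prescribes the order in which these swaps are performed.

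For each rank-one operator of the shape
\[M(z,\tau,\tau)\colon |det(\cdot)|^{z_1}\tau \times |det(\cdot)|^{z_2}\tau \longrightarrow |det(\cdot)|^{z_2}\tau \times |det(\cdot)|^{z_1}\tau,\]
with $z = z_1 - z_2$, the normalized version $M^*(z,\tau,\tau) = L(z,\tau\times\tau)^{-1} M(z,\tau,\tau)$ is holomorphic in $z \in \mathbb{C}$. This is an immediate consequence of Shahidi's local coefficient theory in the supercuspidal setting: the Plancherel measure is, up to invertible elements in $\mathbb{C}[q^{-s},q^s]$, the product of the relevant $L$-factors, so the normalization by $L(z,\tau\times\tau)$ absorbs all possible poles of $M(z,\tau,\tau)$. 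Composing over all $ab$ swaps, the composite $M_{GL}(s,\tau_a,\tau_b)$ becomes holomorphic after dividing by $\prod_{i,j} L(z_{ij}, \tau\times\tau)$, where the shifts $z_{ij}$ are explicit linear functions of $s$ dictated by the two segment data.

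The last step is the multiplicativity identity for Rankin--Selberg $L$-factors of general linear groups, which supplies, up to units in $\mathbb{C}[q^{-s},q^s]$,
\[L(2s,\tau_a\times\tau_b) \;=\; \prod_{i,j} L(z_{ij}, \tau \times \tau).\]
Once this is in hand, $\alpha_{GL}(s,\tau_a,\tau_b)$ matches the product of the rank-one normalizations exactly, and $M^*_{GL}(s,\tau_a,\tau_b)$ is a composition of holomorphic operators, hence itself holomorphic on all of $\mathbb{C}$.

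The main obstacle is the combinatorial bookkeeping: selecting a consistent reduced decomposition of the swap permutation, tracking the exponents $z_{ij}$ through that decomposition, and matching them against the product expansion of $L(2s,\tau_a\times\tau_b)$. When $\tau \not\simeq \tau^\vee$ the $L$-factors involved are units and the statement is trivial; the substantive case is the self-dual one, where the segments underlying $\tau_a$ and $\tau_b$ may overlap and the cancellation between individual rank-one $L$-factors and the global normalization must be verified carefully.
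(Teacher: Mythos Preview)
Your reduction to rank-one operators is fine, and the holomorphicity of each normalized rank-one operator $L(z,\tau\times\tau)^{-1}M(z,\tau,\tau)$ is correct. The gap is in the ``last step'': the identity
\[
L(2s,\tau_a\times\tau_b)\;=\;\prod_{i,j} L(z_{ij},\tau\times\tau)\quad\text{(up to units)}
\]
is false. Multiplicativity in the Rankin--Selberg theory holds for $\gamma$-factors, not for $L$-factors; for discrete series the $L$-factor $L(2s,\tau_a\times\tau_b)$ is a product of only $\min(a,b)$ shifted copies of $L(\cdot,\tau\times\tau)$ (this is the paper's formula (B)), not $ab$ of them. Concretely, for $a=b=2$ the four rank-one shifts are $2s,\,2s,\,2s+1,\,2s-1$, so your product is $L(2s,\tau\times\tau)^2L(2s+1,\tau\times\tau)L(2s-1,\tau\times\tau)$, whereas $L(2s,\tau_2\times\tau_2)=L(2s,\tau\times\tau)L(2s+1,\tau\times\tau)$. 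The discrepancy $L(2s,\tau\times\tau)L(2s-1,\tau\times\tau)$ has honest poles at $s=0$ and $s=\tfrac12$, so it is not a unit.

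Consequently your argument only proves that $\prod_{i,j}L(z_{ij},\tau\times\tau)^{-1}M_{GL}(s,\tau_a,\tau_b)$ is holomorphic; since the true normalizing factor $\alpha_{GL}(s,\tau_a,\tau_b)$ is a \emph{proper divisor} of $\prod_{i,j}L(z_{ij},\tau\times\tau)$, multiplying back the missing factors reintroduces potential poles, and nothing you have written rules them out. The point of the paper's proof is exactly to control this discrepancy: it does not pass all the way to supercuspidals, but instead peels off one copy of $\tau$ at a time via two different embeddings (Way~1/2 on $\tau_a$, Way~3/4 on $\tau_b$), computes the resulting discrepancies $P_i$, and observes that for a suitable choice the $P_i$ have no common pole except in the single degenerate case $a=2$, $b=1$ at $2s=\tfrac12$, which is then dispatched by a direct diagram chase. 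Your sketch is missing precisely this mechanism for handling the extra $L$-factors.
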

\begin{proof}
We would like to give an elementary proof using some standard properties of intertwining operators. Such a simple idea will be applied later on. 
	
\underline{(i). Holomorphicity of co-rank one operator}: $M_{GL}(s,\tau,\tau)$ has only a simple pole at $Re(s)=0$ and its image is the subrepresentation if $|det(\cdot)|^s\tau\times |det(\cdot)|^{-s}\tau$ is reducible, i.e., $s=\pm 1$. Moreover, $M_{GL}(-s,\tau,\tau)\circ M_{GL}(s,\tau,\tau)=\mu(s,\tau)^{-1}id$. Here $\mu(s,\tau)=\frac{(1-q^{-rs})(1-q^{rs})}{(1-q^{r(1-s)})(1-q^{r(1+s)})}$ for some positive integer $r$, up to a non-zero scalar, is the co-rank one Plancherel measure.	

\underline{(ii). Reduced decompositions}: viewing $\tau_a$ or $\tau_b$ as a subrepresentation, i.e., if $a>1$ and $b>1$,
\begin{align*}
\mbox{Way 1: }&\tau_a\hookrightarrow |det(\cdot)|^{\frac{(a-1)}{2}}\tau\times |det(\cdot)|^{-\frac{1}{2}}\tau_{a-1},\\
\mbox{Way 2: }&\tau_a\hookrightarrow |det(\cdot)|^{\frac{1}{2}}\tau_{a-1}\times |det(\cdot)|^{-\frac{a-1}{2}}\tau,\\
\mbox{Way 3: }&\tau_b\hookrightarrow |det(\cdot)|^{\frac{(b-1)}{2}}\tau\times |det(\cdot)|^{-\frac{1}{2}}\tau_{b-1},\\
\mbox{Way 4: }&\tau_b\hookrightarrow |det(\cdot)|^{\frac{1}{2}}\tau_{b-1}\times |det(\cdot)|^{-\frac{b-1}{2}}\tau,\\
\end{align*}
Thus we have the following corresponding commutative diagrams which are enough for our argument,

\underline{Way 2:}
\[\xymatrix{|det(\cdot)|^{s}\tau_a\times |det(\cdot)|^{-s}\tau_b  \ar@{^(->}[r] \ar[dd]_{M_{GL}(s,\tau_a,\tau_b)} & |det(\cdot)|^{s+\frac{1}{2}}\tau_{a-1}\times\mbox{$\underbrace{|det(\cdot)|^{s-\frac{a-1}{2}}\tau\times |det(\cdot)|^{-s}\tau_b}$}\ar[d]^{M_{GL}(\cdots)}\\
&\mbox{$\underbrace{|det(\cdot)|^{s+\frac{1}{2}}\tau_{a-1}\times |det(\cdot)|^{-s}\tau_b}$}\times|det(\cdot)|^{s-\frac{a-1}{2}}\tau \ar[d]^{M_{GL}(\cdots)}\\ 
|det(\cdot)|^{-s}\tau_b\times |det(\cdot)|^{s}\tau_a \ar@{^(->}[r]&|det(\cdot)|^{-s}\tau_b\times|det(\cdot)|^{s+\frac{1}{2}}\tau_{a-1}\times|det(\cdot)|^{s-\frac{a-1}{2}}\tau,
}\]

\underline{Way 3:}
\[\xymatrix{|det(\cdot)|^{s}\tau_a\times |det(\cdot)|^{-s}\tau_b  \ar@{^(->}[r] \ar[dd]_{M_{GL}(s,\tau_a,\tau_b)} & \mbox{$\underbrace{|det(\cdot)|^{s}\tau_{a}\times|det(\cdot)|^{-s+\frac{b-1}{2}}\tau}$}\times |det(\cdot)|^{-s-\frac{1}{2}}\tau_{b-1}\ar[d]^{M_{GL}(\cdots)}\\
&|det(\cdot)|^{-s+\frac{b-1}{2}}\tau\times\mbox{$\underbrace{|det(\cdot)|^{s}\tau_{a}\times|det(\cdot)|^{-s-\frac{1}{2}}\tau_{b-1}}$} \ar[d]^{M_{GL}(\cdots)}\\ 
|det(\cdot)|^{-s}\tau_b\times |det(\cdot)|^{s}\tau_a \ar@{^(->}[r] & |det(\cdot)|^{-s+\frac{b-1}{2}}\tau\times|det(\cdot)|^{-s-\frac{1}{2}}\tau_{b-1}\times|det(\cdot)|^{s}\tau_a.
}\]
It is an easy calculation to see that 
\[L(2s,\tau_a\times\tau_b)=\begin{cases}
L(2s-\frac{b-1}{2},\tau_a\times \tau)L(2s+\frac{1}{2},\tau_a\times \tau_{b-1}), & \mbox{ if }a\geq b;\\
L(2s-\frac{a-1}{2},\tau\times \tau_b)L(2s+\frac{1}{2},\tau_{a-1}\times \tau_b), & \mbox{ if }a< b.
\end{cases} \]
Which implies that the normalization factors of intertwining operators also match each other under Way 3 if $a\geq b$ or Way 2 if $a<b$. Thus it reduces to prove the holomorphy of $M^*_{GL}(s,\tau_a,\tau)$ and $M^*_{GL}(s,\tau,\tau_b)$. In what follows, we only discuss the former case, while the latter case can be proved similarly. Consider the reduced decompositions given by the above Way 2 and the following Way 1 with $a>1$ and $b=1$,

\underline{Way 1:}
\[\xymatrix{|det(\cdot)|^{s}\tau_a\times |det(\cdot)|^{-s}\tau_b  \ar@{^(->}[r] \ar[dd]_{M_{GL}(s,\tau_a,\tau_b)} & |det(\cdot)|^{s+\frac{a-1}{2}}\tau\times\mbox{$\underbrace{|det(\cdot)|^{s-\frac{1}{2}}\tau_{a-1}\times |det(\cdot)|^{-s}\tau_b}$}\ar[d]^{M_{GL}(\cdots)}\\
	&\mbox{$\underbrace{|det(\cdot)|^{s+\frac{a-1}{2}}\tau\times |det(\cdot)|^{-s}\tau_b}$}\times|det(\cdot)|^{s-\frac{1}{2}}\tau_{a-1} \ar[d]^{M_{GL}(\cdots)}\\ 
	|det(\cdot)|^{-s}\tau_b\times |det(\cdot)|^{s}\tau_a \ar@{^(->}[r]&|det(\cdot)|^{-s}\tau_b\times|det(\cdot)|^{s+\frac{a-1}{2}}\tau\times|det(\cdot)|^{s-\frac{1}{2}}\tau_{a-1},
}\] 
and compute the discrepancies $P_i$ of the normalization factors on the left hand side and the right hand side corresponding to Way i, i=1, 2, we obtain
\[P_i:=\frac{\alpha_{GL}(\cdots)\alpha_{GL}(\cdots)}{\alpha_{GL}(s,\tau_a,\tau)}=\begin{cases}
L\left(2s+\frac{a-3}{2},\tau\times \tau\right),&\mbox{ if }i=1;\\
L\left(2s-\frac{a-1}{2}, \tau\times \tau\right),&\mbox{ if }i=2.
\end{cases} \]
Therefore we know that the set of common poles of $P_1$ and $P_2$ is not empty, i.e., $(P_1,~P_2)\neq 1$, if and only if 
\[\frac{a-3}{2}=-\frac{a-1}{2},\mbox{ i.e., }a=2. \]
Which in turn says that the common pole is at $2Re(s)=\frac{1}{2}$. Now we show directly that $M^*_{GL}(s,\tau_2,\tau)$ is holomorphic at $2s=\frac{1}{2}$. This follows from the detailed information we have in the commutative diagram given by Way 1 as follows.

\[\xymatrix{|det(\cdot)|^{\frac{1}{2}}\tau_2\times \tau \ar@{^(->}[r] \ar[dd]_{M_{GL}(s,\tau_a,\tau)} & |det(\cdot)|\tau\times\mbox{$\underbrace{\tau\times \tau}$}\ar[d]^{\mbox{simple pole}}_{scalar}\\
	&\mbox{$\underbrace{|det(\cdot)|\tau\times \tau}$}\times\tau \ar[d]^{subrep.\mapsto 0}\\ 
	\tau\times |det(\cdot)|^{\frac{1}{2}}\tau_a \ar@{^(->}[r]&\tau\times|det(\cdot)|\tau\times\tau,
}\] 
As the second intertwining operator on the right hand side maps $|det(\cdot)|^{\frac{1}{2}}\tau_2\times \tau$ to zero, and the first intertwining operator is a scalar of simple pole, so the composition $M_{GL}(s,\tau_2,\tau)$ is holomorphic at $2s=\frac{1}{2}$, thus $M^*_{GL}(s,\tau_a,\tau)$ is holomorphic in $s$ by induction. Whence our Lemma holds.
\end{proof}
\begin{rem}\label{moeglin}
	One may note that the above lemma follows also easily from M{\oe}glin--Waldspurger's deep result in \cite[Part 1]{moeglin1989residue} which, in some sense, relies heavily on Shahidi's local coefficient theory. But Shahidi's local coefficient theory can't be applied directly to non-quasi-split groups, especially general linear groups over division algebras. To my understanding, the representation-theoretical aspect of normalization of intertwining operators should, in principle, have nothing to do with the Langlands--Shahidi theory, but rather the intrinsic Harish-Chandra's Plancherel measure as illustrated in the proof. However, an explicit formula for Plancherel measure would be helpful and vital to avoid some abstract arguments. Such a formula is established for generic representation and is conjectured to be preserved in an L-packet under the conjectural local Langlands correspondence by F. Shahidi (see \cite[Section 9]{shahidi1990proof} or \cite{ganichino2014}), especially it is known for general linear groups over division algebras via the so-called Jacquet--Langlands correspondence locally and globally \cite{delignekazhdanvigneras1984,badulescu2008,aubertplymen2005plancherel}. In view of the latter result, it is readily to generalize the above lemma to the case $GL_k(D)$ with $D/F$ a division algebra. On the other hand, it is also possible to generalize \cite{moeglin1989residue}, especially Part $(I)$, to the setting of $GL_k(D)$ which we leave to a future work.
\end{rem}
Next we recall the classification of generic discrete series of $G_n$ and the associated Langlands parameters (see \cite{tadic1996generic,muic1998,moeglin2002classification,moeglin2002construction,cogdellkimshapiroshahidi2004,jantzen2000squareintegrable,jantzen2000squareintegrableII,jiangsoudry2003,jiangsoudry2012,arthur2013endoscopic,jantzenliu2014}). The Langlands parameter part is only needed to see clearly the decomposition formula for the tensor product $L$-function $L(s,\tau_a\times \sigma_{\bar{r}})$ in what follows. Indeed, it could be deduced from Langlands--Shahidi's theory, especially the multiplicativity of $\gamma$-factors. With the help of Heiermann's criterion of the existence of discrete series \cite{heiermann2004decomposition}, such an approach will be adopted to attack the Casselman--Shahidi conjecture for other groups in a future work.
\begin{enumerate}[(i).]
	\item generic supercuspidal $\sigma$: $N=2n+1$ or $2n$ depends only on $G_n$,
	\[\phi_\sigma:~W_F\longrightarrow  {^{L}G_n}(\mathbb{C})\stackrel{Std}{\longrightarrow} GL_N(\mathbb{C}) \mbox{ satisfies }\]
	\begin{itemize}
		\item $\phi_\sigma=\oplus_i \phi_{\rho_i}$ with $\rho_i:~ W_F\longrightarrow GL_{N_i}(\mathbb{C})$ irreducible associated to $\rho_i$ supercuspidal representation of $GL_{N_i}$ of type ${^{L}G_n}$,
		\item $\phi_{\rho_i}\not\simeq \phi_{\rho_j}$ for any $i\neq j$.
	\end{itemize}
    \item generic discrete series $\sigma_{\bar{r}}$ supported on $\tau$ and $\sigma$: 
    \[\phi_{\sigma_{\bar{r}}}:~W_F\longrightarrow  {^{L}G_n}(\mathbb{C})\stackrel{Std}{\longrightarrow} GL_N(\mathbb{C}) \mbox{ satisfies } \]
    \begin{itemize}
    	\item $\phi_{\sigma_{\bar{r}}}=\phi_{\tau}\otimes S_{r_1}\oplus\phi_{\tau}\otimes S_{r_2}\oplus\cdots\oplus\phi_{\tau}\otimes S_{r_t}\oplus\phi_\sigma$,
    	\item $r_1>r_2>\cdots>r_t\geq -1$ of the same parity and $t$ is even,
    	\item set $\phi_{\tau}\otimes S_0=0$ and $\phi_{\tau}\otimes S_{-1}=-\phi_{\tau}\otimes S_{1}$.
    \end{itemize}
\end{enumerate} Then the local Langlands correspondence says that
\[\mbox{generic discrete series $\sigma_{\bar{r}}$ are parameterized by those Langlands parameters }\phi_{\sigma_{\bar{r}}}, i.e., \phi_{\sigma_{\bar{r}}}\leftrightarrow \sigma_{\bar{r}} \mbox{ with}\]
\[  \sigma_{\bar{r}}\hookrightarrow |det(\cdot)|^{\frac{r_1-r_2}{4}}\tau_{\frac{r_1+r_2}{2}}\times |det(\cdot)|^{\frac{r_3-r_4}{4}}\tau_{\frac{r_3+r_4}{2}}\times\cdots\times|det(\cdot)|^{\frac{r_{t-1}-r_t}{4}}\tau_{\frac{r_{t-1}+r_t}{2}}\rtimes\sigma.\tag{$\star$} \]
Meanwhile we have the following formula for the tensor product $L$-function
\[L(s,\tau_a\times\sigma_{\bar{r}})=\prod_{i=1}^{t}L(s,\tau_a\times \tau_{r_i})\cdot L(s,\tau_a\times \sigma)=\prod_{i=1}^{t}L(s,\tau_a\times \tau_{r_i})\cdot L\left(s+\frac{a-1}{2},\tau\times \sigma\right), \tag{$\star\star$}\]
where we set $L(s,\tau_a\times \tau_{0}):=1$ and $L(s,\tau_a\times \tau_{-1}):=L(s,\tau_a\times \tau)^{-1}$.

For the convenience of the readers, we summarize some formulas which are applied often for the calculations carried out in the proof of Main Theorem \ref{mainthm} in what follows.
\[\begin{cases}
	L(2s,\tau_a,\rho)=\prod\limits_{i=1}^{\lceil\frac{a}{2}\rceil}L(2s+a+1-2i,\tau,\rho)\prod\limits_{i=1}^{\lfloor\frac{a}{2}\rfloor}L(2s+a-2i,\tau,\rho^-),&\\
	L(2(s-\frac{1}{2}),\tau_{a-1},\rho)=\prod\limits_{i=2}^{\lceil\frac{a+1}{2}\rceil}L(2s+a+1-2i,\tau,\rho)\prod\limits_{i=2}^{\lfloor\frac{a+1}{2}\rfloor}L(2s+a-2i,\tau,\rho^-),&\\
	L(2(s+\frac{1}{2}),\tau_{a-1},\rho)=\prod\limits_{i=1}^{\lceil\frac{a-1}{2}\rceil}L(2s+a+1-2i,\tau,\rho)\prod\limits_{i=1}^{\lfloor\frac{a-1}{2}\rfloor}L(2s+a-2i,\tau,\rho^-).&
\end{cases} \tag{A}\]
\[ L(s,\tau_a\times \tau_{r})=\begin{cases}
	\prod\limits_{i=-\frac{r-1}{2}}^{\frac{r-1}{2}}L(s+\frac{a-1}{2}+i,\tau\times \tau),&\mbox{ if }a\geq r;\\
	\prod\limits_{i=-\frac{a-1}{2}}^{\frac{a-1}{2}}L(s+\frac{r-1}{2}+i,\tau\times \tau),&\mbox{ if }a< r.
	\end{cases} \tag{B}\] 
Now back to our proof of our Main Theorem \ref{mainthm}. Our argument uses the intrinsic non-symmetry property of normalization factors corresponding to different reduced decompositions of Weyl elements as seen in Lemma \ref{lemGL}. 
\begin{proof}[Proof of Main Theorem \ref{mainthm}]
The proof involves a reduction step and an induction argument as follows.

	\underline{\bf Step 1 }(Reduction step). We first reduce the general case $\sigma_{\bar{r}}$ to the case $\sigma_r$ associated to a segment, i.e.,
	\[\phi_{\sigma_{r}}\leftrightarrow \phi_{\tau}\otimes S_{r_1}\oplus \phi_{\tau}\otimes S_{r_2}\oplus\phi_\sigma\mbox{ with }r_1>a>r_2. \]
	This follows from the following facts.
	\begin{enumerate}[(a).]
		\item For $a\geq r_1>r_2$ or $r_1>r_2\geq a$, we have the identity of $L$-functions.
		\[L\left(s-\frac{r_1-r_2}{4},\tau_a\times \tau_{\frac{r_1+r_2}{2}}\right)L\left(s+\frac{r_1+r_2}{4},\tau_a\times\tau_{\frac{r_1+r_2}{2}}\right)=L(s,\tau_a\times \tau_{r_1})L(s,\tau_a\times\tau_{r_2}).\]
		\item For $r_1>r_2>\cdots>r_t\geq -1$ of the same parity and $t$ is even, we know the induced representation 
		\[|det(\cdot)|^{\frac{r_1-r_2}{4}}\tau_{\frac{r_1+r_2}{2}}\times |det(\cdot)|^{\frac{r_3-r_4}{4}}\tau_{\frac{r_3+r_4}{2}}\times\cdots\times|det(\cdot)|^{\frac{r_{t-1}-r_t}{4}}\tau_{\frac{r_{t-1}+r_t}{2}} \]
		is irreducible (see \cite{bernstein1977induced,zelevinsky1980induced}).
	\end{enumerate}
To be precise, in light of ($\star$), viewing $\sigma_{\bar{r}}\hookrightarrow |det(\cdot)|^{\frac{r_1-r_2}{4}}\tau_{\frac{r_1+r_2}{2}}\rtimes \sigma_{\bar{r'}}$ with
\[\sigma_{\bar{r'}}\leftrightarrow\phi_{\sigma_{\bar{r'}}}=\phi_\tau\otimes S_{r_3}\oplus\phi_\tau\otimes S_{r_4}\oplus\cdots\oplus\phi_\tau\otimes S_{r_t}\oplus\phi_\sigma. \]
We have the following decomposition of $M(s,\tau_a,\sigma_{\bar{r}})$:
\[\xymatrix{|det(\cdot)|^{s}\tau_a\rtimes \sigma_{\bar{r}} \ar@{^(->}[r] \ar[ddd]_{M(s,\tau_a,\sigma_{\bar{r}})} & \mbox{$\underbrace{|det(\cdot)|^{s}\tau_{a}\times|det(\cdot)|^{\frac{r_1-r_2}{4}}\tau_{\frac{r_1+r_2}{2}}}$}\rtimes \sigma_{\bar{r'}}\ar[d]^{M_{GL}(\cdots)}\\
	&|det(\cdot)|^{\frac{r_1-r_2}{4}}\tau_{\frac{r_1+r_2}{2}}\times\mbox{$\underbrace{|det(\cdot)|^{s}\tau_{a}\rtimes\sigma_{\bar{r'}}}$} \ar[d]^{M(s,\tau_a,\sigma_{\bar{r'}})}\\ 
	&\mbox{$\underbrace{|det(\cdot)|^{\frac{r_1-r_2}{4}}\tau_{\frac{r_1+r_2}{2}}\times|det(\cdot)|^{-s}\tau_a}$}\rtimes \sigma_{\bar{r'}}  \ar[d]^{M_{GL}(\cdots)}\\
	|det(\cdot)|^{-s}\tau_a\rtimes \sigma_{\bar{r}} \ar@{^(->}[r] & |det(\cdot)|^{-s}\tau_a\times|det(\cdot)|^{\frac{r_1-r_2}{4}}\tau_{\frac{r_1+r_2}{2}}\rtimes\sigma_{\bar{r'}}.
}\]
Via Lemma \ref{lemGL} + ($\star\star$) + (A)(B) + (a)(b), one can do the easy calculation of normalization factors of those intertwining operators and see that they also match each other, thus the reduction step holds.

\underline{\bf Step 2 }(Initial step for induction). There are two initial cases:
\[\mbox{Case }|det(\cdot)|^{s}\tau_a\rtimes\sigma,\mbox{  and Case }|det(\cdot)|^s\tau\rtimes \sigma_{r}. \]

\underline{The case $|det(\cdot)|^{s}\tau_a\rtimes \sigma$ with $a>1$}: We have two ways to decompose $M(s,\tau_a,\sigma)$ and there are two corresponding commutative diagrams as follows.

\underline{Way 1}: Viewing $\tau_a\hookrightarrow|det(\cdot)|^{\frac{a-1}{2}}\tau\times |det(\cdot)|^{-\frac{1}{2}}\tau_{a-1}$, it gives rise to
\[\xymatrix{|det(\cdot)|^{s}\tau_a\rtimes \sigma \ar@{^(->}[r] \ar[ddd]_{M(s,\tau_a,\sigma)} & |det(\cdot)|^{s+\frac{a-1}{2}}\tau\times\mbox{$\underbrace{|det(\cdot)|^{s-\frac{1}{2}}\tau_{a-1}\rtimes \sigma}$}\ar[d]^{M(s-\frac{1}{2},\tau_{a-1},\sigma)}\\
	&\mbox{$\underbrace{|det(\cdot)|^{s+\frac{a-1}{2}}\tau\times|det(\cdot)|^{-s+\frac{1}{2}}\tau_{a-1}}$}\rtimes \sigma  \ar[d]^{M_{GL}(\cdots)}\\
	&|det(\cdot)|^{-s+\frac{1}{2}}\tau_{a-1}\times\mbox{$\underbrace{|det(\cdot)|^{s+\frac{a-1}{2}}\tau\rtimes\sigma}$} \ar[d]^{M(s+\frac{a-1}{2},\tau,\sigma)}\\ 
	|det(\cdot)|^{-s}\tau_a\rtimes \sigma \ar@{^(->}[r] & |det(\cdot)|^{-s+\frac{1}{2}}\tau_{a-1}\times|det(\cdot)|^{-s-\frac{a-1}{2}}\tau\rtimes\sigma.
}\]

\underline{Way 2}: Viewing $\tau_a\hookrightarrow|det(\cdot)|^{\frac{1}{2}}\tau_{a-1}\times |det(\cdot)|^{-\frac{a-1}{2}}\tau$, it gives rise to
\[\xymatrix{|det(\cdot)|^{s}\tau_a\rtimes \sigma \ar@{^(->}[r] \ar[ddd]_{M(s,\tau_a,\sigma)} & |det(\cdot)|^{s+\frac{1}{2}}\tau_{a-1}\times\mbox{$\underbrace{|det(\cdot)|^{s-\frac{a-1}{2}}\tau\rtimes \sigma}$}\ar[d]^{M(s-\frac{a-1}{2},\tau,\sigma)}\\
	&\mbox{$\underbrace{|det(\cdot)|^{s+\frac{1}{2}}\tau_{a-1}\times|det(\cdot)|^{-s+\frac{a-1}{2}}\tau}$}\rtimes \sigma  \ar[d]^{M_{GL}(\cdots)}\\
	&|det(\cdot)|^{-s+\frac{a-1}{2}}\tau\times\mbox{$\underbrace{|det(\cdot)|^{s+\frac{1}{2}}\tau_{a-1}\rtimes\sigma}$} \ar[d]^{M(s+\frac{1}{2},\tau_{a-1},\sigma)}\\ 
	|det(\cdot)|^{-s}\tau_a\rtimes \sigma \ar@{^(->}[r] & |det(\cdot)|^{-s+\frac{a-1}{2}}\tau\times|det(\cdot)|^{-s-\frac{1}{2}}\tau_{a-1}\rtimes\sigma.
}\]
Via Lemma \ref{lemGL} + (A)(B), one can calculate the normalizations factors of intertwining operators and obtain their discrepancies $P_i$ associated to Way i, i=1, 2 as follows.
\begin{align*}
P_1:&=\frac{\alpha(s-\frac{1}{2},\tau_{a-1},\sigma)\alpha_{GL}(\cdots)\alpha(s+\frac{a-1}{2},\tau,\sigma)}{\alpha(s,\tau_a,\sigma)}\\
&=\begin{cases}
L(2s-1,\tau,\rho^-)L(2s+a-2,\tau,\rho)L(s+\frac{a-3}{2},\tau\times \sigma), & \mbox{ $a$ odd};\\
L(2s-1,\tau,\rho)L(2s+a-2,\tau,\rho)L(s+\frac{a-3}{2},\tau\times \sigma), & \mbox{ $a$ even}.
\end{cases}\\
P_2:&=\frac{\alpha(s-\frac{a-1}{2},\tau,\sigma)\alpha_{GL}(\cdots)\alpha(s+\frac{1}{2},\tau_{a-1},\sigma)}{\alpha(s,\tau_a,\sigma)}\\
&=\begin{cases}
L(2s,\tau,\rho^-)L(2s-(a-1),\tau,\rho)L(s-\frac{a-1}{2},\tau\times \sigma), & \mbox{ $a$ odd};\\
L(2s,\tau,\rho)L(2s-(a-1),\tau,\rho)L(s-\frac{a-1}{2},\tau\times \sigma), & \mbox{ $a$ even}.
\end{cases}
\end{align*}
Then we can first compare the possible poles of $P_1$ with poles of $P_2$ and obtain their common possible poles as follows.
\[\mbox{ Possible poles }P_1:~s=\frac{1}{2},~ -\frac{a-2}{2},~ -\frac{a-3}{2};\qquad P_2:~s=0,~\frac{a-1}{2}. \]
The common possible poles are at $s=0$ if $a=3$, and if $a=2$,
\[s=0 \mbox{ or }s=\frac{1}{2}, \]
Now we prove the holomorphy of $M^*(s,\tau_2,\sigma)$ at $s=0,~\frac{1}{2}$ directly as follows. The case $s=0$ follows easily from the fact that $M^*(0,\tau_2,\sigma)^2=id.$, up to a non-zero scalar, and the fact that $\tau_2\rtimes \sigma$ is multiplicity-free (see \cite[P. 272 Remark]{luo2020knapp} in general). It remains to discuss the case $s=\frac{1}{2}$, this is an easy corollary of the following commutative diagram
\[\xymatrix{|det(\cdot)|^{\frac{1}{2}}\tau_2\rtimes \sigma \ar@{^(->}[r] \ar[ddd]_{M(s,\tau_2,\sigma)} & |det(\cdot)|^{1}\tau\times\mbox{$\underbrace{|det(\cdot)|^{0}\tau\rtimes \sigma}$}\ar[d]^{M(0,\tau,\sigma)}_{\mbox{simple pole}}\\
	&\mbox{$\underbrace{|det(\cdot)|^{1}\tau\times|det(\cdot)|^{0}\tau}$}\rtimes \sigma  \ar[d]^{M_{GL}(\cdots)}_{subrep.\mapsto 0}\\
	&|det(\cdot)|^{0}\tau\times\mbox{$\underbrace{|det(\cdot)|^{1}\tau\rtimes\sigma}$} \ar[d]^{M(1,\tau,\sigma)}_{\mbox{holomorphic}}\\ 
	|det(\cdot)|^{-\frac{1}{2}}\tau_2\rtimes \sigma \ar@{^(->}[r] & |det(\cdot)|^{0}\tau\times|det(\cdot)|^{-1}\tau\rtimes\sigma.
}\]
To be precise, as the intertwining operator $M(0,\tau,\sigma)$ is a scalar of simple pole. Thus the composition of the first two arrows on the right hand side is holomorphic restricting to $|det(\cdot)|^{\frac{1}{2}}\tau_2\rtimes \sigma$, hence $M(s,\tau_2,\sigma)$ is holomorphic at $Re(s)=\frac{1}{2}$. Whence we finish the proof of this case by induction.

\underline{The case $|det(\cdot)|^{s}\tau\rtimes \sigma_r$ with $r_1>1>r_2$}: In this case, we have $r_2=0$ if $r_1$ is even, $-1$ otherwise. Then we know that 
\[\sigma_r\hookrightarrow |det(\cdot)|^{\frac{r_1}{4}}\tau_{\frac{r_1}{2}}\rtimes\sigma\mbox{ (if $r_1$ even)}; \quad \sigma_r\hookrightarrow |det(\cdot)|^{\frac{r_1+1}{4}}\tau_{\frac{r_1-1}{2}}\rtimes\sigma\mbox{ (if $r_1$ odd)},\]
which implies that \[\alpha(s,\tau,\sigma_r)=L(2s,\tau,\rho)L(s,\tau\times \tau_{r_1})=L(2s,\tau,\rho)L\left(s+\frac{r_1-1}{2},\tau\times \tau\right).\]
Note that we have two ways to decompose $M(s,\tau,\sigma_r)$ and there are two corresponding commutative diagrams as follows.

\underline{Way 3}: Viewing $\sigma_r\hookrightarrow|det(\cdot)|^{\frac{r_1-1}{2}}\tau\times |det(\cdot)|^{-\frac{1}{2}}\sigma_{r'}$ with $r'_1=r_1-2$, it gives rise to
\[\xymatrix{|det(\cdot)|^{s}\tau\rtimes \sigma_{r} \ar@{^(->}[r] \ar[ddd]_{M(s,\tau,\sigma_{r})} & \mbox{$\underbrace{|det(\cdot)|^{s}\tau\times|det(\cdot)|^{\frac{r_1-1}{2}}\tau}$}\rtimes \sigma_{r'}\ar[d]^{M_{GL}(\cdots)}\\
	&|det(\cdot)|^{\frac{r_1-1}{2}}\tau\times\mbox{$\underbrace{|det(\cdot)|^{s}\tau\rtimes\sigma_{r'}}$} \ar[d]^{M(s,\tau,\sigma_{r'})}\\ 
	&\mbox{$\underbrace{|det(\cdot)|^{\frac{r_1-1}{2}}\tau\times|det(\cdot)|^{-s}\tau}$}\rtimes \sigma_{r'}  \ar[d]^{M_{GL}(\cdots)}\\
	|det(\cdot)|^{-s}\tau\rtimes \sigma_{r} \ar@{^(->}[r] & |det(\cdot)|^{-s}\tau\times|det(\cdot)|^{\frac{r_1-1}{2}}\tau\rtimes\sigma_{r'}.
}\]

\underline{Way 4}: Viewing $\sigma_r\hookrightarrow|det(\cdot)|^{\frac{r_1-r_2}{4}}\tau_{\frac{r_1+r_2}{2}}\rtimes \sigma$, it gives rise to
\[\xymatrix{|det(\cdot)|^{s}\tau\rtimes \sigma_{r} \ar@{^(->}[r] \ar[ddd]_{M(s,\tau,\sigma_{r})} & \mbox{$\underbrace{|det(\cdot)|^{s}\tau\times|det(\cdot)|^{\frac{r_1-r_2}{4}}\tau_{\frac{r_1+r_2}{2}}}$}\rtimes \sigma\ar[d]^{M_{GL}(\cdots)}\\
	&|det(\cdot)|^{\frac{r_1-r_2}{4}}\tau_{\frac{r_1+r_2}{2}}\times\mbox{$\underbrace{|det(\cdot)|^{s}\tau\rtimes\sigma}$} \ar[d]^{M(s,\tau,\sigma)}\\ 
	&\mbox{$\underbrace{|det(\cdot)|^{\frac{r_1-r_2}{4}}\tau_{\frac{r_1+r_2}{2}}\times|det(\cdot)|^{-s}\tau}$}\rtimes \sigma  \ar[d]^{M_{GL}(\cdots)}\\
	|det(\cdot)|^{-s}\tau\rtimes \sigma_{r} \ar@{^(->}[r] & |det(\cdot)|^{-s}\tau\times|det(\cdot)|^{\frac{r_1-r_2}{4}}\tau_{\frac{r_1+r_2}{2}}\rtimes\sigma.
}\]
Via Lemma \ref{lemGL} + (A)(B), one can calculate the normalizations factors of intertwining operators and obtain their discrepancies $P_i$ associated to Way i, i=3, 4 as follows.
\begin{align*}
P_3:&=\frac{\alpha_{GL}(\cdots)\alpha(s,\tau,\sigma_{r'})\alpha_{GL}(\cdots)}{\alpha(s,\tau,\sigma_r)}\\
&=L\left(s-\frac{r_1-1}{2},\tau\times \tau\right)L\left(s+\frac{r_1'-1}{2},\tau\times \tau\right).\\
P_4:&=\frac{\alpha_{GL}(\cdots)\alpha(s,\tau,\sigma)\alpha_{GL}(\cdots)}{\alpha(s,\tau,\sigma_r)}\\
&=L(s,\tau\times \sigma)L\left(s+\frac{r_2-1}{2},\tau\times \tau\right).
\end{align*}
Then we can first compare the possible poles of $P_3$ with poles of $P_4$ and obtain their common possible poles as follows.
\[\mbox{ Possible poles }P_3:~s=\frac{r_1-1}{2},~ -\frac{r_1-3}{2};\qquad P_4:~s=0,~-\frac{r_2-1}{2}. \]
The common possible poles are at, $s=\frac{1}{2}$ if $r_1=2$, and if $r_1=3$,
\[s=0 \mbox{ or }s=1. \] 
Now we prove the holomorphy of $M^*(s,\tau,\sigma_r)$ at $s=0,~\frac{1}{2}$ and $1$ directly as follows. They are an easy corollary of the following commutative diagrams, 
\[(s=0, ~r_1=3, ~\mbox{ and }r_2=-1):~\xymatrix{|det(\cdot)|^{0}\tau\rtimes \sigma_{r} \ar@{^(->}[r] \ar[ddd]_{M(s,\tau,\sigma_{r})} & \mbox{$\underbrace{|det(\cdot)|^{0}\tau\times|det(\cdot)|^{1}\tau}$}\rtimes \sigma\ar[d]^{M_{GL}(\cdots)}_{subrep.\mapsto 0}\\
	&|det(\cdot)|^{1}\tau\times\mbox{$\underbrace{|det(\cdot)|^{0}\tau\rtimes\sigma}$} \ar[d]^{M(s,\tau,\sigma)}_{\mbox{simple pole}}\\ 
	&\mbox{$\underbrace{|det(\cdot)|^{1}\tau\times|det(\cdot)|^{0}\tau}$}\rtimes \sigma  \ar[d]^{M_{GL}(\cdots)}_{subrep. \mapsto 0}\\
	|det(\cdot)|^{0}\tau\rtimes \sigma_{r} \ar@{^(->}[r] & |det(\cdot)|^{0}\tau\times|det(\cdot)|^{1}\tau\rtimes\sigma.
}\]
\[(s=1,~r_1=3,\mbox{ and }r_2=-1):~\xymatrix{|det(\cdot)|^{1}\tau\rtimes \sigma_{r} \ar@{^(->}[r] \ar[ddd]_{M(s,\tau,\sigma_{r})} & \mbox{$\underbrace{|det(\cdot)|^{1}\tau\times|det(\cdot)|^{1}\tau}$}\rtimes \sigma\ar[d]^{M_{GL}(\cdots)}_{\mbox{simple pole}}\\
	&|det(\cdot)|^{1}\tau\times\mbox{$\underbrace{|det(\cdot)|^{1}\tau\rtimes\sigma}$} \ar[d]^{M(s,\tau,\sigma)}_{subrep.\mapsto 0}\\ 
	&\mbox{$\underbrace{|det(\cdot)|^{1}\tau\times|det(\cdot)|^{-1}\tau}$}\rtimes \sigma  \ar[d]^{M_{GL}(\cdots)}_{\mbox{holomorphic}}\\
	|det(\cdot)|^{-1}\tau\rtimes \sigma_{r} \ar@{^(->}[r] & |det(\cdot)|^{-1}\tau\times|det(\cdot)|^{1}\tau\rtimes\sigma.
}\]
\[(s=\frac{1}{2},~r_1=2\mbox{ and }r_2=0):~\xymatrix{|det(\cdot)|^{\frac{1}{2}}\tau\rtimes \sigma_{r} \ar@{^(->}[r] \ar[ddd]_{M(s,\tau,\sigma_{r})} & \mbox{$\underbrace{|det(\cdot)|^{\frac{1}{2}}\tau\times|det(\cdot)|^{\frac{1}{2}}\tau}$}\rtimes \sigma\ar[d]^{M_{GL}(\cdots)}_{\mbox{simple pole}}\\
	&|det(\cdot)|^{\frac{1}{2}}\tau\times\mbox{$\underbrace{|det(\cdot)|^{\frac{1}{2}}\tau\rtimes\sigma}$} \ar[d]^{M(s,\tau,\sigma)}_{subrep.\mapsto 0}\\ 
	&\mbox{$\underbrace{|det(\cdot)|^{\frac{1}{2}}\tau\times|det(\cdot)|^{-\frac{1}{2}}\tau}$}\rtimes \sigma  \ar[d]^{M_{GL}(\cdots)}_{\mbox{holomorphic}}\\
	|det(\cdot)|^{-\frac{1}{2}}\tau\rtimes \sigma_{r} \ar@{^(->}[r] &|det(\cdot)|^{-\frac{1}{2}}\tau\times|det(\cdot)|^{\frac{1}{2}}\tau\rtimes\sigma.
}\]
To be precise, the first diagram says that $M(0,\tau,\sigma_r)$ is a non-zero scalar, the second and last diagrams imply that the compositions of the first two arrows on the right hand are holomorphic restricting to $|det(\cdot)|^1\tau\rtimes \sigma_r$ and $|det(\cdot)|^{\frac{1}{2}}\rtimes \sigma_r$ respectively, thus completing the proof via induction.

\underline{\bf Step 3 }(Induction step). Note that $r_1>a>r_2$ and $a>1$, we have three ways of reduced decompositions of $M(s,\tau_a,\sigma_{r})$ corresponding to the following three embeddings.

\underline{Way 1: $\tau_a\hookrightarrow|det(\cdot)|^{\frac{a-1}{2}}\tau\times |det(\cdot)|^{-\frac{1}{2}}\tau_{a-1}$}:
\[\xymatrix{ |det(\cdot)|^{s}\tau_a\rtimes \sigma_r \ar@{^(->}[r] \ar[ddd]_{M(s,\tau_a,\sigma_r)} & |det(\cdot)|^{s+\frac{a-1}{2}}\tau\times\mbox{$\underbrace{|det(\cdot)|^{s-\frac{1}{2}}\tau_{a-1}\rtimes \sigma_r}$}\ar[d]^{M(s-\frac{1}{2},\tau_{a-1},\sigma_r)}\\
	&\mbox{$\underbrace{|det(\cdot)|^{s+\frac{a-1}{2}}\tau\times|det(\cdot)|^{-s+\frac{1}{2}}\tau_{a-1}}$}\rtimes \sigma_r  \ar[d]^{M_{GL}(\cdots)}\\
	&|det(\cdot)|^{-s+\frac{1}{2}}\tau_{a-1}\times\mbox{$\underbrace{|det(\cdot)|^{s+\frac{a-1}{2}}\tau\rtimes\sigma_r}$} \ar[d]^{M(s+\frac{a-1}{2},\tau,\sigma_r)}\\ 
	|det(\cdot)|^{-s}\tau_a\rtimes \sigma_r \ar@{^(->}[r] & |det(\cdot)|^{-s+\frac{1}{2}}\tau_{a-1}\times|det(\cdot)|^{-s-\frac{a-1}{2}}\tau\rtimes\sigma_r.
}\] 

\underline{Way 2: $\tau_a\hookrightarrow|det(\cdot)|^{\frac{1}{2}}\tau_{a-1}\times |det(\cdot)|^{-\frac{a-1}{2}}\tau$}:
\[\xymatrix{ |det(\cdot)|^{s}\tau_a\rtimes \sigma_r \ar@{^(->}[r] \ar[ddd]_{M(s,\tau_a,\sigma_r)} & |det(\cdot)|^{s+\frac{1}{2}}\tau_{a-1}\times\mbox{$\underbrace{|det(\cdot)|^{s-\frac{a-1}{2}}\tau\rtimes \sigma_r}$}\ar[d]^{M(s-\frac{a-1}{2},\tau,\sigma_r)}\\
	&\mbox{$\underbrace{|det(\cdot)|^{s+\frac{1}{2}}\tau_{a-1}\times|det(\cdot)|^{-s+\frac{a-1}{2}}\tau}$}\rtimes \sigma_r  \ar[d]^{M_{GL}(\cdots)}\\
	&|det(\cdot)|^{-s+\frac{a-1}{2}}\tau\times\mbox{$\underbrace{|det(\cdot)|^{s+\frac{1}{2}}\tau_{a-1}\rtimes\sigma_r}$} \ar[d]^{M(s+\frac{1}{2},\tau_{a-1},\sigma_r)}\\ 
	|det(\cdot)|^{-s}\tau_a\rtimes \sigma_r \ar@{^(->}[r] & |det(\cdot)|^{-s+\frac{a-1}{2}}\tau\times|det(\cdot)|^{-s-\frac{1}{2}}\tau_{a-1}\rtimes\sigma_r.
}\] 

\underline{Way 3: $\sigma_\gamma\hookrightarrow|det(\cdot)|^{\frac{r-1}{4}}\tau\rtimes \sigma_{r'}$}:
\[\xymatrix{|det(\cdot)|^{s}\tau_a\rtimes \sigma_{r} \ar@{^(->}[r] \ar[ddd]_{M(s,\tau_a,\sigma_{r})} & \mbox{$\underbrace{|det(\cdot)|^{s}\tau_{a}\times|det(\cdot)|^{\frac{r-1}{4}}\tau}$}\rtimes \sigma_{r'}\ar[d]^{M_{GL}(\cdots)}\\
	&|det(\cdot)|^{\frac{r_1-1}{4}}\tau\times\mbox{$\underbrace{|det(\cdot)|^{s}\tau_{a}\rtimes\sigma_{r'}}$} \ar[d]^{M(s,\tau_a,\sigma_{r'})}\\ 
	&\mbox{$\underbrace{|det(\cdot)|^{\frac{r_1-1}{4}}\tau\times|det(\cdot)|^{-s}\tau_a}$}\rtimes \sigma_{r'}  \ar[d]^{M_{GL}(\cdots)}\\
	|det(\cdot)|^{-s}\tau_a\rtimes \sigma_{r} \ar@{^(->}[r] & |det(\cdot)|^{-s}\tau_a\times|det(\cdot)|^{\frac{r_1-1}{4}}\tau\rtimes\sigma_{r'}.
}\]
Via Lemma \ref{lemGL} + (A)(B) + ($\star\star$), one can carry out the calculation for the corresponding normalization factors and their discrepancies $P_i$ associated to Way i, i=1, 2, 3, we have
\begin{align*}
P_1:&=\frac{\alpha\left(s-\frac{1}{2},\tau_{a-1},\sigma_{r}\right)\alpha_{GL}\left(s,\tau|det(\cdot)|^{\frac{a-1}{2}},\tau_{a-1}|det(\cdot)|^{\frac{1}{2}}\right)\alpha\left(s+\frac{a-1}{2},\tau,\sigma_{r}\right)}{\alpha(s,\tau_a,\sigma_{r})}\\
&=\begin{cases}
L(2s-1,\tau,\rho^-)L(2s+a-2,\tau,\rho)L\left(s+\frac{a-r_2}{2}-1,\tau\times \tau\right)L\left(s+\frac{a-3}{2},\tau\times \sigma\right), & \mbox{$a$ odd and $r_2>0$};\\
L(2s-1,\tau,\rho)L(2s+a-2,\tau,\rho)L\left(s+\frac{a-r_2}{2}-1,\tau\times \tau\right)L\left(s+\frac{a-3}{2},\tau\times \sigma\right), & \mbox{$a$ even and $r_2>0$};\\
L(2s-1,\tau,\rho^-)L(2s+a-2,\tau,\rho), & \mbox{$a$ odd and $r_2\leq 0$};\\
L(2s-1,\tau,\rho)L(2s+a-2,\tau,\rho), & \mbox{$a$ even and $r_2\leq 0$}.
\end{cases}
\end{align*}
Similarly,
\begin{align*}
P_2:&=\begin{cases}
L(2s,\tau,\rho^-)L(2s-(a-1),\tau,\rho)L\left(s-\frac{a-r_2}{2},\tau\times \tau\right)L\left(s-\frac{a-1}{2},\tau\times \sigma\right), & \mbox{ $a$ odd and $r_2>0$};\\
L(2s,\tau,\rho)L(2s-(a-1),\tau,\rho)L\left(s-\frac{a-r_2}{2},\tau\times \tau\right)L\left(s-\frac{a-1}{2},\tau\times \sigma\right), & \mbox{ $a$ even and $r_2>0$};\\
L(2s,\tau,\rho^-)L(2s-(a-1),\tau,\rho), & \mbox{ $a$ odd and $r_2\leq 0$};\\
L(2s,\tau,\rho)L(2s-(a-1),\tau,\rho), & \mbox{ $a$ even and $r_2\leq 0$}.
\end{cases}\\
P_3:&=\begin{cases}
L\left(s-\frac{r_1-a}{2},\tau\times \tau\right)L\left(s+\frac{r_1-a}{2}-1,\tau\times \tau\right),& \qquad a\leq r_1-2;\\
L\left(s-\frac{1}{2},\tau\times \tau\right), & \qquad a=r_1-1.
\end{cases}
\end{align*}
Thus it is easy to see the possible poles of $P_i$ are as follows.
\begin{align*}
P_1: ~&s=\frac{1}{2},~ -\frac{a-2}{2},~ -\frac{a-r_2-2}{2}, ~-\frac{a-3}{2};\\
P_2: ~&s=0, ~\frac{a-1}{2}, ~\frac{a-r_2}{2};\\
P_3: ~&s=\frac{1}{2} \mbox{ (if $a=r_1-1$)},\\
P_3: ~&s= \frac{r_1-a}{2},~ -\frac{r_1-a-2}{2} \mbox{ (if $a\leq r_1-2$)}.
\end{align*}
It is readily to see that the possible common poles are at 
\[s=0; \qquad s=\frac{1}{2} \mbox{ (if $a=r_1-1=r_2+1$ or $a=r_1-1=2$)}. \]
Now we prove the holomorphy of $M^*(s,\tau_a,\sigma_r)$ at those special cases as follows.

\underline{s=0}: Note that $M^*(s,\tau_a,\sigma_{r})\circ M^*(-s,\tau_a,\sigma_{r})=\beta(s,\tau_a,\sigma_{r})^{-1}\beta(-s,\tau_a,\sigma_{r})^{-1}id.$, up to a non-zero scalar. Here $\beta(s,\tau_a,\sigma_{r})=L(2s+1,\tau_a,\rho)L(s+1,\tau_a\times \sigma_{r})$. On one hand, $\beta(s,\tau_a,\sigma_r)$ has no poles at $Re(s)=0$. Thus $M^*(0,\tau_a,\sigma_{r})^2=id.$, up to a non-zero scalar. On the other hand, we know that $\tau_a\rtimes \sigma_{r}$ is multiplicity-free, then we must have
\[M^*(s,\tau_a,\sigma_{r})\mbox{ is holomorphic at } Re(s)=0 \mbox{ and non-zero.} \]

\underline{s=$\frac{1}{2}$}: We first discuss the case $a=2=r_1-1$, this implies that $r_2=-1$, which in turn says that \[\sigma_r\hookrightarrow |det(\cdot)|^1\tau\rtimes\sigma.\]
Now we investigate the reduced decomposition of $M(s,\tau_2,\sigma_r)$ with respect to the embedding
\[\tau_2\hookrightarrow |det(\cdot)|^{\frac{1}{2}}\tau\times|det(\cdot)|^{-\frac{1}{2}}\tau,\]
which gives us the following commutative diagram
\[\xymatrix{|det(\cdot)|^{\frac{1}{2}}\tau_2\rtimes \sigma_r \ar@{^(->}[r] \ar[ddd]_{M(s,\tau_2,\sigma_r)} & |det(\cdot)|^{1}\tau\times\mbox{$\underbrace{|det(\cdot)|^{0}\tau\rtimes \sigma_r}$}\ar[d]^{M(0,\tau,\sigma_r)}_{\mbox{simple pole}}\\
	&\mbox{$\underbrace{|det(\cdot)|^{1}\tau\times|det(\cdot)|^{0}\tau}$}\rtimes \sigma_r  \ar[d]^{M_{GL}(\cdots)}_{subrep.\mapsto 0}\\
	&|det(\cdot)|^{0}\tau\times\mbox{$\underbrace{|det(\cdot)|^{1}\tau\rtimes\sigma_r}$} \ar[d]^{M(1,\tau,\sigma_r)}_{\mbox{holomorphic}}\\ 
	|det(\cdot)|^{-\frac{1}{2}}\tau_2\rtimes \sigma_r \ar@{^(->}[r] & |det(\cdot)|^{0}\tau\times|det(\cdot)|^{-1}\tau\rtimes\sigma_r.
}\]
As $\tau\rtimes \sigma_r$ is multiplicity-free, so $M(0,\tau,\sigma_r)$ is a scalar with a simple pole. Then the diagram tells us that the composition of the first two arrows on the right hand side is holomorphic restricting to $|det(\cdot)|^{\frac{1}{2}}\rtimes \sigma_r.$ Whence this case is settled.

For the case $a=r_1-1=r_2+1$, we have the embedding $\sigma_r\hookrightarrow |det(\cdot)|^\frac{1}{2}\tau_a\rtimes \sigma$ and the following associated commutative diagram
\[\xymatrix{|det(\cdot)|^{\frac{1}{2}}\tau_a\rtimes \sigma_{r} \ar@{^(->}[r] \ar[ddd]_{M(s,\tau_a,\sigma_{\bar{r}})} & \mbox{$\underbrace{|det(\cdot)|^{\frac{1}{2}}\tau_{a}\times|det(\cdot)|^{\frac{1}{2}}\tau_{a}}$}\rtimes \sigma\ar[d]^{M_{GL}(\cdots)}_{\mbox{simple pole}}\\
	&|det(\cdot)|^{\frac{1}{2}}\tau_{a}\times\mbox{$\underbrace{|det(\cdot)|^{\frac{1}{2}}\tau_{a}\rtimes\sigma}$} \ar[d]^{M(s,\tau_a,\sigma)}_{subrep.\mapsto 0}\\ 
	&\mbox{$\underbrace{|det(\cdot)|^{\frac{1}{2}}\tau_{a}\times|det(\cdot)|^{-\frac{1}{2}}\tau_a}$}\rtimes \sigma  \ar[d]^{M_{GL}(\cdots)}_{holomorphic}\\
	|det(\cdot)|^{-\frac{1}{2}}\tau_a\rtimes \sigma_{r} \ar@{^(->}[r] & |det(\cdot)|^{-\frac{1}{2}}\tau_a\times|det(\cdot)|^{\frac{1}{2}}\tau_{a}\rtimes\sigma.
}\]
It is easy to see that the middle arrow associated to $M(s, \tau_a, \sigma)$ maps $|det(\cdot)|^{\frac{1}{2}}\rtimes \sigma_r$ to zero, but the first arrow on the right hand side is a scalar with a simple pole, then the composition of those two arrows is holomorphic, thus $M(s,\tau_a,\sigma_r)$ is holomorphic at $Re(s)=\frac{1}{2}$. Whence our Main Theorem holds by induction. 
\end{proof}	
\begin{rem}\label{simplifyargument}
	Based on the fact that the standard intertwining operator of a standard module is always well-defined and non-zero, and the fact that the normalization factor is non-zero for $Re(s)>0$, we know that the normalized intertwining operator is always well-defined and non-zero for $Re(s)>0$. Thus the discussion on the holomorphy for $Re(s)>0$ could be omitted. Indeed, to prove Casselman--Shahidi's holomorphicity conjecture, we only need to choose one appropriate reduced decomposition such that the corresponding discrepancy only has non-negative poles, and one can readily see that such a decomposition really exists as calculated in the paper. In view of this, our proof could be simplified further. But our argument is originated from attacking the holomorphicity problem of a large class of induced representations involved in the generalized doubling method, and its application to the proof of the Casselman--Shahidi conjecture is just an accidental by-product, so we still want to keep it there to illustrate the idea how we handle analogous problems for general induced representations, for example, replacing discrete series $\tau_a$ by Speh representations $Speh_c(\tau_a)$. 
\end{rem}

\section{main theorem \ref{mainthm} for groups of classical type}\label{sectiongeneral}
In this section, we would like to extend our Main Theorem \ref{mainthm} to the setting of groups of classical type. The main idea is to reduce it to the special case of classical groups which is proved earlier. 

Let $G$ be a quasi-split connected reductive group defined over $F$. Fix a Borel subgroup $B=TU$ of $G$, and denote $A_T$ to be the maximal split torus in the Levi subgroup $T$. Let $Q=LV$ be a standard parabolic subgroup of $G$ with $L$ its Levi subgroup and $V$ its unipotent radical, we denote $\bar{Q}=L\bar{V}$ to be the opposite parabolic subgroup of $Q$. For all the notions below, please refer to \cite{silberger2015introduction,waldspurger2003formule,casselman1995introduction} for the details. 

\subsection{Root datum}Let $X(L)_F$ be the group of $F$-rational characters of $L$, and $A_L$ be the maximal $F$-split sub-torus of the center $Z_L$ of $L$. We set 
\[\mathfrak{a}_L=Hom(X(L)_F,\mathbb{R}),\qquad\mathfrak{a}^\star_{L,\mathbb{C}}=\mathfrak{a}^\star_L\otimes_\mathbb{R} \mathbb{C}, \]
where
\[\mathfrak{a}^\star_L=X(L)_F\otimes_\mathbb{Z}\mathbb{R}=Hom_{alg}(A_L,\mathbb{G}_m)\otimes_\mathbb{Z}\mathbb{R} \]
denotes the dual of $\mathfrak{a}_L$. Recall that the Harish-Chandra homomorphism $H_Q:L\longrightarrow\mathfrak{a}_L$ is defined by
\[q^{\left< \chi,H_Q(m)\right>}=|\chi(m)| \] 
for all $\chi\in X(L)_F$ and $m\in L$. Here $\left<\cdot,\cdot\right>$ is the natural pairing for $\mathfrak{a}_L^*\times \mathfrak{a}_L$. For $\nu \in \mathfrak{a}_{L,\mathbb{C}}^*$, it is viewed as a character of $L$ of the form $q^{\left<\nu,H_L(\cdot)\right>}$. 

Next, let $\Phi$ be the root system of $G$ with respect to $T$, i.e., $A_T$, and $\Delta$ be the set of simple roots determined by $U$. For $\alpha\in \Phi$, we denote by $\alpha^\vee$ the associated coroot, and by $\omega_\alpha$ the associated reflection in the Weyl group $W^G$ of $T$ in $G$ with
\[W^G:=N_G(A_T)/C_G(A_T)=N_G(T)/T=\left<\omega_\alpha:~\alpha\in\Phi\right>. \]
Denote by $w_0^G$ the longest Weyl element in $W^G$, and similarly by $w_0^L$ the longest Weyl element in the Weyl group $W^L$ of a Levi subgroup $L$.

Likewise, we denote by $\Phi_L^G$ the set of reduced relative roots of $L$ in $G$, i.e., $(\Phi|_{A_L})_{red}$, specifically
\[\Phi_L^G:=\{\alpha|_{A_L}: ~\alpha\in \Phi,~ \alpha|_{A_L}\neq c\beta|_{A_L} \mbox{ for some }\beta\in \Phi \mbox{ and some integer }c>1 \}. \]
We denote by $\Delta_L$ the set of relative simple roots determined by $V$ and by $$W_L^G:=N_G(A_L)/C_G(A_L)=N_G(L)/L$$ the relative Weyl group of $L$ in $G$. In general, a relative reflection $\omega_\alpha:=w_0^{L_\alpha}w_0^L$ with respect to a relative root $\alpha$ does not preserve our Levi subgroup $L$, here $L_\alpha\supset L$ is the co-rank one Levi subgroup associated to $\alpha$ w.r.t. $L$ in $G$. In particular, if $L=T$, then this $\omega_{\alpha}$ is the same one defined earlier. For simplicity, we will use the same terminology if no confusion arises.

\subsection{Parabolic induction}For $Q=LV$ a parabolic subgroup of $G$ and an admissible representation $(\sigma,V_\sigma)$ of $L$, we have the following normalized parabolic induction of $Q$ to $G$ which is an admissible representation of $G$
\[Ind_Q^G(\sigma):=\big\{\mbox{smooth }f:G\rightarrow V_\sigma|~f(nmg)=\delta_Q(m)^{1/2}\sigma(m)f(g), \forall n\in V, m\in L\mbox{ and }g\in G\big\}, \]
here $\delta_Q$ stands for the modulus character of $Q$, i.e., denote by $\mathfrak{v}$ the Lie algebra of $V$,
\[\delta_Q(nm)=|det~Ad_\mathfrak{v}(m)|. \]
For a rational character $\nu\in \mathfrak{a}_L^*$ of $L$, we write $Ind^G_Q(\sigma_\nu)$ for the induced representation $Ind^G_Q(\sigma\otimes \nu)$. Define the action of $w\in W_L$ on a representation $\sigma$ of $L$ to be $w.\sigma:=\sigma\circ Ad(w^{-1})$, and $w.Q:=Ad(w).Q=wQw^{-1}$. 

\subsection{Whittaker model}\label{genericnotion}For each root $\alpha\in\Phi$, there exists a non-trivial homomorphism $X_\alpha$ of $F$ into $G$ such that, for $t\in T$ and $x\in F$, 
\[tX_\alpha(x)t^{-1}=X_\alpha(\alpha(t)x). \]
We say a character $\theta$ of $U$ is \underline{generic} if the restriction of $\theta$ to $X_\alpha(F)$ is non-trivial for each simple root $\alpha\in \Delta$. Then the Whittaker function space $\mathcal{W}_\theta$ of $G$ with respect to $\theta$ is the space of smooth complex functions $f$ on $G$ satisfying, for $u\in U$ and $g\in G$,
\[f(ug)=\theta(u)f(g), \]
i.e., $\mathcal{W}_\theta=Ind_U^G(\theta)$. We say an irreducible admissible representation $\pi$ of $G$ is \underline{$\theta$-generic} if
\[\pi\xrightarrow[non-trivial]{G-equiv.} \mathcal{W}_\theta. \]
By \cite[Section 3]{shahidi1990proof}, one can fix a generic character $\theta$ of $U$ such that it is compatible with $w_0^Gw_0^L$ for every Levi subgroup $L$. For simplicity, we also denote by $\theta$ the restriction of $\theta$ to $L\cap U$ if there is no confusion. Every generic representation of $L$ becomes
generic with respect to $\theta$ after changing the splitting in $U$. In view of this, throughout the paper for simplicity, we will only say a representation is generic without specifying its dependence on $\theta$.

\subsection{Casselman--Shahidi conjecture}
Let $P_0=M_0N_0\subset G$ (resp. $P=MN\supset P_0=M_0N_0$) be a standard (resp. maximal) parabolic subgroup with $M_0$ (resp. $M$) its Levi subgroup, $\rho$ be a unitary generic supercuspidal representation of $M_0$, and $\nu\in \mathfrak{a}_{M_0}^*$, we have an induced representation $Ind^M_{M\cap P_0}(\rho_\nu)$. Assume that the induced representation contains a generic discrete series constituent $\sigma$. Let $r$ be the adjoint action of the $L$-group $^{L}M$ of $M$ on the Lie algebra $^{L}\mathfrak{n}$ of the $L$-group of $N$. Then $r=\oplus_{i=1}^{t} r_i$, with $r_i$ irreducible for $i=1,\cdots,t$. Such a decomposition is ordered according to the order of eigenvalues of $^{L}A_M$ in $^{L}N$. Attached to $r_i$ and the generic discrete series representation $\sigma$, we have an L-function $L(s,\sigma,r_i)$ defined by F. Shahidi (see \cite{shahidi1990proof} for the details). 

Next, assume that $M$ is generated by the subset $\Theta=\Delta\backslash\{\alpha_0\}$ of simple roots $\Delta$ of $A_T$ in $U$ for some $\alpha_0\in \Delta$. Recall that the simple reflection $\omega_M=w^G_0w^M_0$, attached to $\alpha_0$, satisfies $\omega_{M}.\Theta\subset \Delta$. Denote $N_{\omega_{M}}:=U\cap \omega_{M}.\bar{N}$ and
$\tilde{\alpha}_0:=\left<\rho_M,\alpha_0^\vee\right>^{-1}\rho_M$, where $\rho_M$ is half the sum of roots in $N$ and $\alpha_0^\vee$ is the co-root of $\alpha_0$. Given $s\in \mathbb{C}$, it is known that $s\tilde{\alpha}_0\in \mathfrak{a}_{M,\mathbb{C}}^*$, and we define the associated standard intertwining operator for $Ind^G_P(\sigma_{s\tilde{\alpha}_0})$ as follows.
\begin{align*}
M(s,\sigma):~&Ind^G_P(\sigma_{s\tilde{\alpha}_0})\longrightarrow Ind^G_{\omega_{M}.\bar{P}}((\omega_{M}.\sigma)_{\omega_{M}.(s\tilde{\alpha}_0)}) \\
&f(g)\mapsto \int_{N_{\omega_{M}}}f(\omega_{M}^{-1}ng)dn.
\end{align*}
It is well known that $M(s,\sigma)$ converges absolutely for $Re(s)> 0$ and extends to a meromorphic function of $s\tilde{\alpha}_0\in\mathfrak{a}_{M,\mathbb{C}}^*$. The knowledge of its poles on all of $\mathfrak{a}_{M,\mathbb{C}}^*$ is very important, and Casselman--Shahidi proposed the following conjecture in \cite[P. 563 (3)]{casselman1998irreducibility} based on the profound Langlands--Shahidi theory. 
\begin{namedthm*}{Casselman--Shahidi's holomorphicity conjecture}
	Keep the notions as before. We have
	\[\prod_{i=1}^{t}L(is,\sigma^\vee,r_i) M(s,\sigma)\]
	is homomorphic in $s\in \mathbb{C}$. Here $\sigma^\vee$ is the contragredient dual of $\sigma$.
\end{namedthm*}
From now on, we fix $G$ to be a quasi-split group of classical type, i.e., Type $A_n$, $B_n$, $C_n$, or $D_n$. The Levi subgroup $M_0$ can be written as two parts $M_0(GL)\times M_0(G)$, here $M_0(G)$ means the part of Type $G$ and $M_0(GL)$ means the part of products of Type $GL$. For two parabolic subgroups $Q=LV\supset Q'=L'V'\supset P_0=M_0N_0$, if formally $L'(G)=M_0(G)$ and $L'(GL)=L(GL)\times GL$, we say that $L'$ is a Siegel Levi subgroup of $L$ relative to $M_0$. Similar notions for root systems. The main result of this section is to show that, via reducing to the special case handled in Main Theorem \ref{mainthm}, please refer to Remark \ref{clarifyabstractargument} for the simple philosophy hidden in the abstract argument which may help you to understand the proof,
\begin{thm}\label{mthgeneral}
	Retain the notation as above. We have 
	\[\mbox{Casselman--Shahidi's holomorphicity conjecture holds for groups of classical type}.\]
\end{thm}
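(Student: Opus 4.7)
The plan is to reduce the case of a general quasi-split group $G$ of classical type to the special cases already proved in Main Theorem \ref{mainthm} (for the classical groups $Sp_{2n}, SO_{2n+1}, SO_{2n}$, extended by Remark \ref{rmkforquasi-split} to the quasi-split unitary and $SO^*_{2n}$ cases) and in Lemma \ref{lemGL} (for the $GL$-type operators). As a preliminary observation, both $M(s,\sigma)$ and the product $\prod_{i=1}^{t} L(is, \sigma^\vee, r_i)$ are determined by the root datum of $G$ together with the $L$-group $^{L}M$ and its adjoint action on $^{L}\mathfrak{n}$; hence they are insensitive to isogenies, direct products of reductive groups, and central extensions. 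This permits a preliminary reduction to the case where $G$ is simply connected and absolutely simple, of Dynkin type $A_n$, $B_n$, $C_n$, or $D_n$.

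For Types $B_n, C_n, D_n$, every maximal parabolic $P = MN$ has Levi $M \simeq GL_k \times G'$ with $G'$ a smaller classical group of the same type (possibly trivial). A generic discrete series $\sigma$ of $M$ factors as $\sigma = \tau_a \otimes \sigma_\gamma$, and the intertwining operator $M(s, \sigma)$ is literally the operator $M(s, \tau_a, \sigma_\gamma)$ of Main Theorem \ref{mainthm}. A direct inspection, using the decomposition of $r = \oplus_{i=1}^t r_i$ into pieces of type $Sym^2$, $\Lambda^2$, tensor product, or Asai, shows that $\prod_{i=1}^{t} L(is, \sigma^\vee, r_i)$ coincides with $\alpha(s, \tau_a, \sigma_\gamma)$ up to an invertible factor in $\mathbb{C}[q^{-s}, q^s]$. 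The desired holomorphicity is therefore immediate from Main Theorem \ref{mainthm}.

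For Type $A_n$, i.e., essentially $G = GL_{n+1}$, $SL_{n+1}$, or quasi-split $U_{n+1}$, the Levi $M$ of a maximal parabolic is a product of $GL$-type factors (possibly over the quadratic extension $E/F$ in the unitary case). I would decompose the Weyl element $\omega_M = w_0^G w_0^M$ into a reduced product of co-rank one reflections between consecutive $GL$-blocks (going through intermediate standard Levis), thereby factoring $M(s, \sigma)$ as a composition of $GL$-type intertwining operators of the form $M_{GL}(s, \tau_a, \tau_b)$, each of which is holomorphic after normalization by Lemma \ref{lemGL}. The corresponding factorization of $\prod_i L(is, \sigma^\vee, r_i)$ follows from the multiplicativity of Langlands--Shahidi $L$-functions in terms of Rankin--Selberg and Asai pieces. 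The main obstacle is this bookkeeping of normalization factors and verifying that the co-rank one pieces combine without introducing extra singularities; this is handled using the multiplicativity of $\gamma$-factors in the Langlands--Shahidi theory, together with, in the unitary case, the stable base-change results cited in Remark \ref{rmkforquasi-split}.
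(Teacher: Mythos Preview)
Your overall strategy—reduce to the groups where Main Theorem~\ref{mainthm} and Lemma~\ref{lemGL} are already proved—matches the paper's intent, but the route is different. The paper does not argue via isogeny. It works intrinsically with the sub-root system $\Phi_\rho^M := \{\alpha \in \Phi_{M_0}^M : \omega_\alpha . \rho \simeq \rho\}$ attached to the cuspidal support $\rho$ of $\sigma$, invokes Harish-Chandra's criterion that $\Phi_\rho^M$ must span $(\mathfrak{a}_{M_0}^M)^*$ when $\sigma$ is discrete, and shows that its irreducible pieces are of Type $A$ or of the same type as $G$. It then peels off blocks via Way~3 decompositions $\omega_M = \omega_{GL}\,\omega_{M'}\,\omega_{GL}$, using that the $\omega_{GL}$-intertwiner is an isomorphism whenever $\omega_{GL}.\rho \not\simeq \rho$, until what remains is literally the classical-group situation of Main Theorem~\ref{mainthm}. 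Your isogeny reduction is more concrete but requires justifying that both $M(s,\sigma)$ and the Langlands--Shahidi $L$-factors are unchanged under central isogeny; the paper's root-system route avoids this entirely.

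There is a genuine gap in your Type $A_n$ paragraph. For a quasi-split unitary group $U_{n+1}$, the Levi of a maximal parabolic is \emph{not} a product of $GL$-type blocks: it is $\mathrm{Res}_{E/F} GL_k \times U_{n+1-2k}$, with a smaller unitary factor present. So your proposed factorization of $\omega_M$ into co-rank one $GL$-reflections, and hence the appeal to Lemma~\ref{lemGL}, does not go through. The confusion is that the \emph{absolute} type of $U_{n+1}$ is $A_n$, but its \emph{relative} root system is of type $C$ or $BC$, and it is the relative structure that governs the Levi decomposition and the intertwining operator. The fix is easy—unitary groups are already covered by Main Theorem~\ref{mainthm} via Remark~\ref{rmkforquasi-split}, so they belong with your $B_n, C_n, D_n$ cases—but as written the argument is incorrect for them.
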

\begin{proof}
As the induced representation $Ind^M_{M\cap P_0}(\rho_\nu)$ contains a discrete series constituent, then by Harish-Chandra's theorem (see \cite[Theorem 3.9.1]{silberger1981discrete}, \cite[Corollary 8.7]{heiermann2004decomposition}, or \cite[Lemma 5.1]{luo2018R}), we know that the sub-root system $\Phi_\rho^M:=\{\alpha\in \Phi^M_{M_0}:~\omega_\alpha.\rho\simeq \rho\}$ should generate the subspace $(\mathfrak{a}^M_{M_0})^*:=Span_\mathbb{R}\{\alpha:~\alpha\in \Phi_{M_0}^M\}$. Moreover, via a simple calculation,
\begin{itemize}
	\item If the root system of $M$ is of Type $A_n$: $\Phi_\rho^M=\Phi^M_{M_0}$.
	\item If the root system of $M$ is of other types: $\Phi_\rho^M$ is the direct sum of root systems of Type $D_n$ or the same type as started. Here $D_n$ can be $D_2$ or $D_3$ if it contains two roots of the form $\{e_i\pm e_j\}$.
\end{itemize}
To be precise, for the case of Type $A_n$, we know first $\Phi_\rho^M$ must be irreducible, otherwise it contracts the dimension equality of the vector space it generated. Secondly, for any simple root $\gamma$, either it is in $\Phi_\rho^M$, or there exist two roots $\alpha$ and $\beta$ in $\Phi_\rho^M$ satisfying that $\gamma=\pm \alpha\pm \beta$ for a choice of signs, thus our claim follows from the fact that $Ad(\omega_\beta).(\omega_\alpha)=\omega_{\omega_\beta.\alpha}=\omega_\gamma$. As for the case of other types, we know that the irreducible pieces, appearing in the decomposition of the root system $\Phi_\rho^M$, are only possible of Type $A_n$, Type $D_n$ or the same type as started, which results from the fact that 
\[Ad(\omega_{e_1-e_2}).\omega_{ce_1}=\omega_{ce_2}\mbox{ and }Ad(\omega_{ce_1}).\omega_{e_1-e_2}=\omega_{e_1+e_2}\qquad (c=1,2). \]
But the dimension equality condition implies that Type $A_n$ can not appear, whence our claim holds.

Up to twisting by a Weyl element, we may assume that $\Delta_{M_0}$ decomposes into canonical blocks w.r.t. $T$, i.e., it is associated to a block partition of $\Delta$, and further assume that $\Phi_\rho^M$ also decomposes into canonical blocks w.r.t. $M_0$, i.e., it is associated to a block partition of $\Delta_{M_0}$. Moreover, by absorbing the unitary part of $s\tilde{\alpha}_0$ into $\rho$, we may only need to consider the holomorphicity problem for $s\in \mathbb{R}$.

On one hand, it is an easy calculation to see that Type $A_n$ can be proved directly as in Lemma \ref{lemGL}. To be precise, consider the reduced decomposition of $\omega_M$ in terms of co-rank one simple reflections with respect to $M_0$, we know that the simple reflection $\omega_{\alpha_0}$ appears only once, thus \[\omega_M.\rho\simeq \rho\mbox{ if and only if }\omega_{\alpha_0}.\rho\simeq \rho.\]
By the well-known fact that the co-rank one Plancherel measure is always a non-zero scalar if $\rho$ is not self-dual, with the help of Shahidi's definition of $L$-functions, it suffices to prove Theorem \ref{mthgeneral} in the setting of $\omega_{M}.\rho\simeq \rho$, which implies that $\Phi_\rho^G=\Phi_{M_0}$. Therefore our problem falls into the case of $GL$ discussed in Lemma \ref{lemGL}. Whence this case is settled.
 
As for other types, write $\Phi_\rho^M=\Phi_\rho^M(GL)\sqcup \Phi_\rho^M(G)=\Phi_\rho^M(GL)\sqcup (\sqcup_j\Phi_\rho^M(G)_j)$, according to the Levi structure of the form $\mbox{Type }A_n\times \mbox{Type }G$ for $M$. Here $\{\Phi_\rho^M(G)_j\}$ are irreducible pieces of $\Phi_\rho^M(G)$. By analyzing the reduced decomposition of $\omega_M$ with respect to the Siegel Levi subgroup of each $\Phi_\rho^M(G)_j$ relative to $\Phi_\rho^M(G)-\Phi_\rho^M(G)_j$, i.e., Way 3: $\omega_M=\omega_{GL}\omega_{M'}\omega_{GL}$, here $\omega_{GL}$ means the $GL$-part intertwining and $\omega_{M'}$ means the same type intertwining as $\omega_{M}$, we can reduce our problem to the irreducible case, i.e., $\Phi_\rho^M(G)$ is irreducible (possibly empty). This follows from the analysis done above for Type $A_n$ and the well-known fact that the $\omega_{GL}$-intertwining operator is always an isomorphism if $\omega_{GL}.\rho\not\simeq \rho$ (see \cite{silberger1980special}). If $\Phi_\rho^M(G)=\emptyset$, then Theorem \ref{mthgeneral} follows from Main Theorem \ref{mainthm} by the fact that the parameters $s$ and $\nu$, and the $L$-functions in the normalization factor can be taken in the same form as in the classical groups case. If $\Phi_\rho^M(G)\neq \emptyset$, consider again the Siegel Levi subgroup of $M$ relative to $M_0$ and the associated reduced decomposition, i.e., Way 3: $\omega_M=\omega_{GL}\omega_{M'}\omega_{GL}$, we know that there are only two cases, either $\omega_{GL}.\rho\simeq \rho$ or $\omega_{GL}.\rho\not\simeq \rho$. The latter case can be reduced to the proved situation $\Phi_\rho^M(G)= \emptyset$, while the former case implies that, by the above analysis done for Type $A_n$, $\rho$ is fixed by all simple reflections in the Siegel Levi subgroup of $G$ relative to $M_0$. Thus it has been reduced to the classical groups case. Whence our Theorem \ref{mthgeneral} holds.
\end{proof}
\begin{rem}\label{clarifyabstractargument}
	To clarify the above abstract argument, we use the classical group $G_n$ as a model to summarize the main points of our argument as follows.
	\begin{itemize}
		\item Levi subgroups $M_0=\left(\prod_{i}GL_{\iota_i}\right)\times\left(\prod_{m}\left(\prod_{j}GL_{m_j}\right)\times G_{n_{00}}\right)\subset M=GL_\iota\times G_{n_0}\subset G_{n}$ with further Levi subgroup relations
		\[\prod_{i}GL_{\iota_i}\subset GL_\iota\mbox{ and }\prod_{m}\left(\prod_{j}GL_{m_j}\right)\times G_{n_{00}}\subset G_{n_0}. \]
		\item $\sigma$ discrete series of $M$ is a constituent of $Ind^M(\rho_\nu)$ with $\rho$ supercuspidal and $\nu$ an unramified character of $M_0$, i.e.,
		\[\sigma=\sigma_\iota\otimes \sigma_{n_0}\mbox{ and }\rho=(\otimes_{i}\rho_{\iota_i})\otimes (\otimes_m(\otimes_j \rho_{m_j})\otimes \rho_{n_{00}}) \]
	\end{itemize}
Thus we have 
\begin{enumerate}[(i).]
	\item Analysis done for Type $A_n$ is equivalent to saying that
	\[\rho_{\iota_i}\simeq \rho_{\iota_{i'}}\mbox{ for any }i,~i'. \]
	\item $\Phi_\rho^M(G)=\sqcup_m\phi_{\rho}^M(G)_m$ is equivalent to saying that
	\[\rho_{m_j}\simeq \rho_{m_{j'}}\mbox{ but }\rho_{m_j}\not \simeq\rho_{m'_{j'}}\mbox{ for any }j,~j', \mbox{ and }m\neq m'.  \] 
	\item The reduction steps are to consider the reduced decomposition given by the embedding  $$\sigma_{n_0}\hookrightarrow (\times_j(\rho_{m'_j}\otimes \nu_{m'_j}))\rtimes (\times_{m\neq m'}\times_j(\rho_{m_j}\otimes\nu_{m_j})\rtimes \rho_{n_{00}}).$$
By doing so, via (i), we know that those $m$ such that $\rho_{m_j}\not\simeq\rho_{\iota_i}$ can be removed, i.e., it reduces to the case that $\Phi_\rho^M(G)_m$ is irreducible with $\rho_{m_j}\simeq\rho_{\iota_i}$ if non-empty.
\end{enumerate}
\end{rem}

\paragraph*{\textbf{Acknowledgments}} The author would like to dedicate this work to Professor Wee Teck Gan on the occasion of his upcoming 50th birthday for his generous support and guidance during the Ph.D study at NUS, Singapore. The author also thanks Eyal Kaplan for his kindness and help at BIU, Israel. Thanks are also due to the referee for his/her detailed comments. The author was supported by the ISRAEL SCIENCE FOUNDATION Grant Number 376/21.
			
\bibliographystyle{amsalpha}
\bibliography{ref}

			
\end{document}